





\documentclass[sn-mathphys]{sn-jnl}



\jyear{2022}%

\theoremstyle{thmstyleone}%
\newtheorem{theorem}{Theorem}
%

\theoremstyle{thmstylethree}%

\newtheorem{observation}{Observation}%

\theoremstyle{thmstylethree}%
\newtheorem{corollary}[theorem]{Corollary}

\raggedbottom

\begin{document}

\title[Connected power domination number of product graphs]{Connected power domination number of product graphs}


\author*{ \sur{S. Ganesamurthy}}\email{ganesamurthy66@gmail.com}

\author{\sur{J. Jeyaranjani}}\email{jeyaranjani.j@gmail.com}
\equalcont{These authors contributed equally to this work.}

\author{\sur{R. Srimathi}}\email{gsrimathi66@gmail.com}
\equalcont{These authors contributed equally to this work.}

\affil*[1]{\orgdiv{Department of Mathematics}, \orgname{Periyar University}, \orgaddress{\city{Salem}, \postcode{636011}, \state{Tamil Nadu}, \country{India}}}

\affil[2]{\orgdiv{Department of Computer science and Engineering}, \orgname{Kalasalingam Academy of Research and Education}, \orgaddress{\street{ Krishnankoil}, \city{Srivilliputhur}, \postcode{626128}, \state{Tamil Nadu}, \country{India}}}

\affil[3]{\orgdiv{Department of Mathematics}, \orgname{Idhaya College of Arts and Science for Women}, \orgaddress{\city{Lawspet}, \postcode{605008}, \state{Puducherry}, \country{India}}}


\abstract{In this paper, we consider the connected power domination number ($\gamma_{P, c}$) of three standard graph products. The exact value for $\gamma_{P, c}(G\circ H)$ is obtained for any two non-trivial graphs $G$ and $H.$ Further, tight upper bounds are proved for the connected power domination number of the Cartesian product of two graphs $G$ and $H.$ Consequently, the exact value of the connected power domination number of the Cartesian product of some standard graphs is determined. Finally, the connected power domination number of tensor product of graphs is discussed.}

\keywords{Connected Power domination number, Power domination number, Product graphs.}


\pacs[MSC Classification]{05C38, 05C76, 05C90.}

\maketitle

\section{Introduction}

We only consider non-trivial simple connected graphs of finite order, unless otherwise stated. For a vertex $v\in V(G),$ the \textit{open neighborhood} of $v$ is $N(v)=\{u\,:\,uv\in E(G)\}$ and the \textit{closed neighborhood} of $v$ is $N[v]=\{v\}\cup N(v).$ For a set $A\subset V(G),$ the \textit{open neighborhood of $A$} is $N(A)= \cup_{v\in A} N(v)$  and the \textit{closed neighborhood of $A$} is $N[A]=\cup_{v\in A} N[v].$ The subgraph of the graph $G$ induced by the subset $A$ of the vertices of $G$ is denoted by $\langle A \rangle.$ A vertex $v\in V(G)$ is called \textit{universal vertex} of $G$ if $v$ is adjacent to each vertex of the graph $G.$

Let $K_n,\,P_n,\,C_n,\,W_n,\,F_n,$ and $K_{m,\,n},$ respectively, denote complete graph, path, cycle, wheel, fan, and complete bipartite graph. For $k\geq 3$ and $1\leq m_1\leq m_2\leq \dots\leq m_k,$ the complete multipartite graph with each partite set of size $m_i$ is denoted by $K_{m_1,\,m_2,\,\dots,\,m_k}.$ 

Let $S\subset V(G).$ If $N[S]=V(G), $ then $S$ is called a \textit{domination set}. If the subgraph induced by the dominating set is connected, then we say $S$ is a \textit{connected dominating set}. For each vertex $v\in V(G),$ if a dominating set $S$ satisfies the property $N(v) \cap S \neq \emptyset,$ then we call the set $S$ is a \textit{total dominating set}. The minimum cardinality of dominating set (connected dominating set) of $G$ is called domination number (connected domination number) and  it is denoted by $\gamma(G)$ ($\gamma_c(G)$).

\emph{\textbf{Algorithm:}}\cite{dmks22} For the graph $G$ and a set $S\subset V(G),$ let $M(S)$ be the collection of vertices of $G$ monitored by $S.$ The set $M(S)$ is built by the following rules:
\begin{enumerate}
	\item (Domination)
	\item[] Set $M(S) \leftarrow S\cup N(S).$
	\item (Propagation)
	\item[] As long as there exists $v\in M(S)$ such that $N(v)\cap (V(G)-M(S))=\{w\},$ set $M(S)\leftarrow M(S)\cup \{w\}.$
\end{enumerate}

In other words, initially the set $M(S)=N[S],$ and then repeatedly add to $M(S)$ vertices $w$ that has a neighbor $v$ in $M(S)$ such that all the other neighbors of $v$ are already in $M(S).$ After no such vertex $w$ exists, the set monitored by $S$ is constructed. 

For a subset $S$ of $V(G),$ if $M(S)=V(G),$ then the set $S$ is called a \textit{power dominating set} (PDS). The minimum cardinality of power dominating set of $G$ denoted by $\gamma_{p}(G).$ If the subgraph of $G$ induced by the vertices of a PDS $S$ is connected, then the set $S$ is \textit{connected power domination set} (CPDS), and its minimum cardinality is denoted by $\gamma_{P,\,c}(G).$  

\noindent {\bf \cite{laa428} Color-change rule:} \textit{If $G$ is a graph with each vertex colored either white or black, $u$ is a black vertex of $G,$ and exactly one neighbor $v$ of $u$ is white, then change the color of $v$ to black. Given a coloring of $G,$ the derived coloring is the result of applying the color-change rule until no more changes are possible.} A \textit{zero forcing set} for a graph G is a set $Z\subset V (G)$ such that if initially the vertices in $Z$ are colored black and the remaining vertices are colored white, the entire graph G may be colored black by repeatedly applying the color-change rule. The zero forcing number of $G, Z(G),$ is the minimum cardinality of a zero forcing set. 

If a zero forcing set $Z$ satisfies the connected condition, then we call such set as \textit{connected zero forcing set} (CZFC) and it is denoted by $Z_c.$  The connected zero forcing number of $G, Z_c(G),$ is the minimum cardinality of a connected zero forcing set.

For a graph $G$ and a set $X \subseteq V(G),$ the set $X_i,\,i>0,$ denotes the collection of all vertices of the graph $G$ monitored by the propagation up to step $i,$ that is, $X_1=N[X]$ (dominating step) and $X_{i+1}=\cup\{N[v]\,:\, v\in X_i$ such that $\vert N[v]\setminus X_i\vert \leq 1\}$ (propagation steps). Similarly, for a connected zero forcing set $Z_c \subseteq V(G)$ and $i\geq 1,$ let $Z_c^i$ denote the collection of all vertices of the graph  $G$ whose color changed from white to black at step $i$ (propagation steps).

For two graphs $G$ and $H,$ the vertex set of the Cartesian product ($G\square H$), tensor product $(G\times H)$ and lexicographic product ($G\circ H$) is $V(G)\times V(H).$ The adjacency relationship between the vertices $u=(a,\,b)$ and $v=(x,\,y)$ of these products are as follows:
\begin{itemize}
	\item Cartesian product: $uv\in E(G\square H)$ if either $a=x$ and $by\in E(H),$ or $b=y$ and $ax\in E(G).$
	\item Tensor product: $uv\in E(G\times H)$ if $ax\in E(G)$ and $by\in E(H).$
	\item Lexicographic product: $uv\in E(G\circ H)$ if $ax\in E(G),$ or $a=x$ and $by\in E(H).$
\end{itemize}

Let $G \ast H$ be any of the three graph products defined above. Then the subgraph of $G \ast H$ induced by $\{g\}\times V(H)$ ($V(G)\times \{h\})$ is called an $H$-fiber ($G$-fiber) and it is denoted by $^gH$ ($G^h$). Notation and definitions which are not presented here can be found in \cite{rbbook,hikbook}.

The problem of computing the power domination number of $G$ is NP-hard in general. The complexity results for power domination in graphs are studied in \cite{ajco19,gnr52,hhhh15,lllncs}. Further, some upper bound for the power domination number of graphs is obtained in \cite{zkc306}. Furthermore, the power domination number of some standard families of graphs and product graphs are studied in \cite{bf58,bgpv38,dmks22,dh154,ks13,ks16,skp18,sk11,sk48,vthesis,vvlncs,vvh38}. Recently, Brimkvo et al. \cite{bms38} introduced the concept of connected power domination number of graph and obtained the exact value for trees, block graph, and cactus graph.  Further, in \cite{gplncs}, the complexity results for split graph, chain graph, and chordal graph are considered. 

In this paper, we extend the study of connected power domination number for three standard products.

\section{The Lexicographic Product}

The exact value of the power domination number of the lexicographic product of graphs obtained in \cite{dmks22}. In this section, we have obtained the exact value of the connected power domination number of $G\circ H.$ The assumption of the connected condition for graph $H$ is relaxed in this section.  
 \begin{theorem}
For any two graphs $G$ and $H,$ 

\begin{center}
$\gamma_{P,c}(G\circ H)= 
\left\{ \begin{array}{rl}
\mbox{$\gamma_c(G);$} & \mbox{ if $\gamma_c(G)\geq 2,$} \\ 
\mbox{$1;$} & \mbox{either $\gamma(G)=\gamma(H)=1$ or $\gamma(G)=1$ and $H\cong \overline{K_2},$}\\
\mbox{$2;$} & \mbox{if $\gamma(G)=1$ and $\gamma(H)>1$ with $\vert V(H)\vert\geq 3.$}
\end{array}\right.$
\end{center}
\end{theorem}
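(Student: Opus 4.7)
The plan is to dispatch the three branches of the statement separately, with the first branch ($\gamma_c(G) \geq 2$) carrying the main content. For the upper bound there, I would take a minimum connected dominating set $D$ of $G$ and any $h \in V(H)$, and argue that $D \times \{h\}$ is already a connected dominating set of $G \circ H$, hence a CPDS, of cardinality $\gamma_c(G)$. The induced subgraph $\langle D \times \{h\}\rangle$ in the lexicographic product is isomorphic to $\langle D\rangle$ and so is connected. For domination, any $(a,b) \in V(G \circ H)$ has $a$ either adjacent to some $d \in D$ in $G$ (when $a \notin D$), or, when $a \in D$, equipped with a $D$-neighbour $d' \neq a$ by connectivity of $\langle D\rangle$ together with $|D| \geq 2$; either way $(a,b)$ is $G \circ H$-adjacent to $(d',h)$ through the $G$-edge $ad'$.

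For the matching lower bound, let $S$ be an arbitrary CPDS of $G \circ H$ and let $S_G$ denote its first-coordinate projection to $V(G)$. Consecutive vertices on a path in $\langle S\rangle$ have equal or $G$-adjacent first coordinates, so $\langle S_G\rangle$ is connected in $G$. The key claim is that $S_G$ also dominates $G$: if some $g \in V(G)$ had no neighbour in $S_G$, then an induction on the propagation step shows the fibre $^gH$ stays disjoint from $M(S)$, because the only candidate propagator adjacent to $^gH$ lies in a fibre $^aH$ with $ag \in E(G)$, and any such vertex is $G \circ H$-adjacent to \emph{every} vertex of $^gH$, giving at least $|V(H)| \geq 2$ simultaneously unmonitored neighbours and blocking propagation. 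Hence $|S| \geq |S_G| \geq \gamma_c(G)$.

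The two value-$1$ sub-cases reduce to short direct verifications: either $(g,h)$ is universal in $G \circ H$ whenever $g,h$ are universal in their factors, or, when $H \cong \overline{K_2}$ with $V(H) = \{h_1,h_2\}$ and $g$ is universal in $G$, the singleton $\{(g,h_1)\}$ leaves only $(g,h_2)$ unmonitored, and any $(g',h_1)$ with $g' \in N_G(g)$ has $(g,h_2)$ as its sole unmonitored neighbour, so one propagation step finishes. For the upper bound in the last branch, the set $S = \{(g,h_1),(g',h_2)\}$ with $g$ universal in $G$, $g' \in N_G(g)$, and arbitrary $h_1,h_2 \in V(H)$ is connected via the $G$-edge $gg'$ and dominates $G \circ H$: the vertex $(g',h_2)$ dominates the whole fibre $^gH$ through $g'g \in E(G)$, and $(g,h_1)$ dominates every other fibre.

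The principal obstacle I foresee is the matching lower bound $\gamma_{P,c}(G \circ H) \geq 2$ in this last branch. The projection argument above forces any singleton CPDS $\{(a,b)\}$ to have $a$ universal in $G$, and the uncovered set is then $\{a\} \times (V(H) \setminus N_H[b])$, non-empty because $\gamma(H) > 1$. The delicate part is to rule out every propagation route into this sub-fibre, which I expect to require a careful case analysis of potential propagator vertices that exploits the assumptions $\gamma(H) > 1$ and $|V(H)| \geq 3$ together with the structure of $H$; this is where the main technical care is needed.
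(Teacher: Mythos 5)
Your handling of the first branch and of the two value-$1$ subcases, together with the two-vertex upper bound in the last branch, is correct and follows essentially the same route as the paper: the paper also uses $X\times\{u\}$ for the upper bound (invoking that a connected dominating set with $\gamma_c(G)\geq 2$ is a total dominating set, which is exactly your observation that every vertex of $D$ has a $D$-neighbour), and its lower bound is your projection argument in compressed form (an undominated fibre cannot be entered by propagation because any outside neighbour is adjacent to all $\vert V(H)\vert\geq 2$ of its vertices). One small wording point: to prove that the projection $S_G$ dominates $G$ you should assume $g\notin N_G[S_G]$ rather than only that $g$ has no neighbour in $S_G$, since if $g\in S_G$ the fibre ${}^{g}H$ already meets $S$; the argument is otherwise fine.

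The genuine gap is the one you flag yourself: the lower bound $\gamma_{P,c}(G\circ H)\geq 2$ in the third branch is never actually proved, and in fact no case analysis can prove it, because the inequality is false under the stated hypotheses. Take $G$ to be any graph with a universal vertex $u$ and $H=P_4$ with vertices $v_1,v_2,v_3,v_4$ in order; then $\gamma(G)=1$, $\gamma(H)=2>1$ and $\vert V(H)\vert=4\geq 3$, yet $\{(u,v_2)\}$ is a CPDS of $G\circ H$: its closed neighbourhood misses only $(u,v_4)$, and the monitored vertex $(u,v_3)$ has $(u,v_4)$ as its unique unmonitored neighbour, so a single propagation step monitors everything and $\gamma_{P,c}(G\circ H)=1$. (The same happens for $H=C_4$, and more generally for any $H$ having a vertex $b$ such that $\{b\}$ is a power dominating set of $H$.) Your blocking argument only eliminates propagators lying outside the fibre ${}^{u}H$, and it needs $\vert V(H)\setminus N_H[b]\vert\geq 2$; propagation inside the fibre mimics power-domination propagation in $H$ started from the monitored set $N_H[b]$, so the correct dividing hypothesis for the value-$2$ branch is $\gamma_P(H)>1$, not $\gamma(H)>1$. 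The paper gives no argument here either (it merely asserts the bound is easy to observe), so your suspicion that this is the delicate point is well founded; as stated, the step cannot be completed without strengthening the hypothesis on $H$.
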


\begin{proof}
First we complete the proof for the case $\gamma_c(G)\geq 2.$ Let $X$ be a minimum connected dominating set of $G$ and let $u\in V(H).$ Set $S=X\times \{u\}.$ As $X$ is a connected dominating set of $G,$ it is a total dominating set of $G;$ consequently, each vertex of $G$ is a neighbor of some vertex in $X.$ Thus each vertex $(g,\,h)\in V(G\circ H)$ is a neighbour of some vertex in $S.$ Since $\langle S\rangle$ is connected and which monitors each vertex of $G\circ H,$ $\gamma_{P,c}(G\circ H)\leq \gamma_c(G).$

Assume that $S$ is a connected power dominating set of $G\circ H$ whose cardinality is strictly less than $\gamma_c(G).$ Then there exists a vertex $u\in V(G)$ such that $\{u\}\times V(H) \cap N[S]=\emptyset.$ Hence the vertices in $\{u\}\times V(H)$ are monitored by the propagation. Let $A= \{u\}\times V(H).$ Clearly, each vertex in $V(G\circ H)\setminus A$ has either zero or $\vert A\vert$ neighbours in $\langle A\rangle\cong \,^uH$-fiber. Therefore propagation on $^uH$-fiber is not possible as $\vert V(H)\vert\geq 2.$ Therefore $\gamma_{P,c}(G\circ H)\geq \gamma_c(G).$

 Let $\gamma(G)=\gamma(H)=1.$ Then the graphs $G$ and $H$ have universal vertices, namely, $u$ and $v,$ respectively. Consequently, the vertex $(u,\,v)\in V(G\circ H)$ is a universal vertex of the graph $G\circ H.$ Thus $\gamma_{P,c}(G\circ H)=1.$ 

Consider $\gamma(G)=1$ and $H\cong \overline{K_2}.$ Let $u$ be a universal vertex of $G$ and let $V(H)=\{x,\,y\}.$ Then the vertex $(u,\,x)\in V(G\circ H)$ dominates all the vertices of the graph $G\circ H$ except $(u,\,y).$ Clearly, the vertex $(u,\,y)$ is monitored by the propagation as $(u,\,y)$ is the only unmonitored vertex of $G\circ H.$ Therefore, $\gamma_{P,c}(G\circ H)=1.$ 

Assume that $\gamma(G)=1$ and $\gamma(H)>1.$ It is easy to observe that a $\gamma_{P,c}(G\circ H)\geq 2$ as $\vert V(H)\vert\geq 3$ and $\gamma(H)>1.$ Let $u$ be a universal vertex of the graph $G.$ Then the set $\{(u,\,a),\,(v,\,a)\}$ dominates all the vertices of the graph $G\circ H.$ Since $u$ is a universal vertex,  $\langle \{(u,\,a),\,(v,\,a)\}\rangle\cong K_2.$ Hence, $\gamma_{P,c}(G\circ H)\leq 2.$
\end{proof}

\section{The Cartesian Product}
We begin this section by proving a general upper bound for the connected power domination number of $G\square H.$

\begin{theorem}
For any two graphs $G$ and $H,$ 
\begin{center}
$\gamma_{P,c}(G \,\square\,H)\leq$ min$\{\gamma_{P,c}(G)\vert V(H)\vert, \gamma_{P,c}(H)\vert V(G)\vert\}.$
\end{center}
\end{theorem}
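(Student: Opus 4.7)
The plan is to take a minimum CPDS $S_G$ of $G$, lift it to $S = S_G \times V(H) \subseteq V(G\square H)$, and verify that $S$ is itself a CPDS of $G\square H$. Since $|S| = \gamma_{P,c}(G)\,|V(H)|$, this proves one half of the inequality; the symmetric construction using a minimum CPDS of $H$ delivers the second term of the minimum.

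Connectedness of $\langle S \rangle$ is routine: each $H$-fiber $^sH$ with $s\in S_G$ lies in $S$ and is isomorphic to the connected graph $H$, while for each $h\in V(H)$ the slice $S_G \times \{h\}$ lies in $S$ and inherits the connected structure of $\langle S_G \rangle$. Given any $(s_1,h_1),(s_2,h_2)\in S$, concatenating a path in $S_G\times\{h_1\}$ from $(s_1,h_1)$ to $(s_2,h_1)$ with a path in $^{s_2}H$ from $(s_2,h_1)$ to $(s_2,h_2)$ exhibits a path in $\langle S\rangle$.

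The main step is to show $M(S) = V(G\square H)$, which I would establish by induction on $i$ of the claim $(S_G)_i \times V(H) \subseteq M(S)$, where $(S_G)_i$ denotes the set monitored by $S_G$ in $G$ after step $i$. The base $i=1$ reduces to the identity $N[S] = N_G[S_G]\times V(H)$. For the inductive step, let $v$ be added at step $i$ through some $u\in (S_G)_{i-1}$ with $N_G(u)\setminus (S_G)_{i-1} = \{v\}$; then for any $h\in V(H)$, the $G$-direction neighbors $(u',h)$ of $(u,h)$ with $u'\neq v$ lie in $(S_G)_{i-1}\times V(H)\subseteq M(S)$, and the $H$-direction neighbors $(u,h')$ lie in $\{u\}\times V(H)\subseteq M(S)$, so $(v,h)$ is the unique unmonitored neighbor of $(u,h)$ and is added by propagation. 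The principal obstacle, and the reason for lifting along all of $V(H)$ rather than along a single vertex of $H$, is exactly this control of the $H$-direction neighbors: without the full fiber over $u$ already being monitored, the rank-one propagation condition in $G\square H$ could fail.
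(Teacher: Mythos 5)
Your proposal is correct and follows essentially the same route as the paper: both lift a minimum CPDS $X$ of $G$ to $X\times V(H)$, note that the dominating step covers $N_G[X]\times V(H)$, and then induct on the propagation steps of $G$, using the fact that the whole fiber $\{u\}\times V(H)$ over an already-monitored vertex $u$ is monitored so that $(v,h)$ is the unique unmonitored neighbor of $(u,h)$. Your explicit verification of connectedness of $\langle X\times V(H)\rangle$ is a welcome detail the paper leaves implicit; otherwise the arguments coincide.
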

\begin{proof}
Let $X$ be a CPDS of $G.$ Consider $X'=X\times V(H).$ Clearly, for each vertex $u\in X,\,^uH$-fiber is observed as $\{u\}\times V(H)\in X'.$ Also, by our choice of $X',$ for each vertex $v\in N(X),\,^vH$-fiber is observed (dominating step). To complete the proof, it is enough to show that if $w\in X_i,$ then $V(^wH)\in X_i'.$ We proceed with the proof by induction. The result is true for $i=1.$ Assume that the result holds for some $i>0.$ Let $w\in X_{i+1}.$ If $w\in X_i,$ then $V(^wH)\in X_i'$ by induction hypothesis. If $w\notin X_i,$ then there exists a vertex $y\in X_i$ which is the neighbour of $w$ such that $\vert N[y]\setminus X_i\vert\leq 1.$ This gives $V(^yH)\in X_i',$ by induction hypothesis. Hence, for fixed $h\in V(H),\,\vert N[(y,\,h)]\setminus X_i'\vert=\vert N[y]\setminus X_i\vert\leq 1.$ Thus, $N[(y,\,h)]\in X_{i+1}'$ which implies that $(w,\,h)\in X_{i+1}'.$ As it is true for each $h\in V(H),\, V(^wH)\in X_{i+1}'.$ Therefore, $\gamma_{P,c}(G \,\square\,H)\leq \gamma_{P,c}(G)\vert V(H)\vert.$ It is easy to prove that $\gamma_{P,c}(G \,\square\,H)\leq \gamma_{P,c}(H)\vert V(G)\vert$ as $G\square H$ is commutative.
\end{proof}

From the definitions of CPDS and CZFS, it is clear that if $X\subseteq V(G)$  is a CPDS, then $N[X]$ is a CZFS. From this observation, we prove the following upper bound for $\gamma_{P,c}(G\square H)$ in terms of the product of Connected zero forcing number and connected domination number.

\begin{theorem}\label{upcpdczfs}
For any two graphs $G$ and $H,$ 
\begin{center}
$\gamma_{P,c}(G \,\square\,H)\leq$ min$\{Z_c(G)\gamma_c(H), Z_c(H)\gamma_c(G)\}.$
\end{center}
\end{theorem}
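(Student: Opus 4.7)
The plan is to take a minimum connected zero forcing set $Z$ of $G$ (so $\vert Z\vert=Z_c(G)$ and $\langle Z\rangle$ is connected in $G$) and a minimum connected dominating set $D$ of $H$ (so $\vert D\vert=\gamma_c(H)$ and $\langle D\rangle$ is connected in $H$), and show that $S:=Z\times D$ is a CPDS of $G\square H$; this gives $\gamma_{P,c}(G\square H)\leq Z_c(G)\gamma_c(H)$, and the symmetric bound $\leq Z_c(H)\gamma_c(G)$ follows by swapping $G$ and $H$ using the commutativity of $\square$. For the connectedness of $\langle S\rangle$, given $(z_1,d_1),(z_2,d_2)\in Z\times D$, one concatenates a $G$-path inside $Z\times\{d_1\}$ (using the connectedness of $Z$ in $G$) with an $H$-path inside $\{z_2\}\times D$ (using the connectedness of $D$ in $H$), both of which lie entirely in $S$.

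Next I would analyze the monitored set. The dominating step yields $N[S]=(Z\times N_H[D])\cup(N_G[Z]\times D)$, and since $D$ dominates $H$ we have $N_H[D]=V(H)$; in particular $Z\times V(H)\subseteq M(S)$ from the outset. Let $Z=Z^{(0)}\subseteq Z^{(1)}\subseteq\cdots\subseteq Z^{(\tau)}=V(G)$ be the chain obtained by serializing the zero forcing process in $G$ starting from $Z$, so that each $Z^{(t+1)}$ is $Z^{(t)}$ together with one newly-forced vertex. I would then prove by induction on $t$ the invariant $Z^{(t)}\times V(H)\subseteq M(S)$; the base case $t=0$ is precisely the observation above.

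For the inductive step, take a force $v\to w$ in $G$ at stage $t+1$, so $v\in Z^{(t)}$ and $N_G(v)\setminus\{w\}\subseteq Z^{(t)}$. I would mirror this force in every $H$-slice: for each $d\in V(H)$, the vertex $(v,d)\in Z^{(t)}\times V(H)\subseteq M(S)$ has neighbourhood $(N_G(v)\times\{d\})\cup(\{v\}\times N_H(d))$ in $G\square H$; the $G$-direction neighbours $(v',d)$ with $v'\in N_G(v)\setminus\{w\}$ lie in $Z^{(t)}\times\{d\}\subseteq M(S)$ by the zero forcing condition, while the $H$-direction neighbours $(v,d')$ lie in $\{v\}\times V(H)\subseteq Z^{(t)}\times V(H)\subseteq M(S)$ because $v\in Z^{(t)}$. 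Hence $(w,d)$ is the unique unmonitored neighbour of $(v,d)$, and propagation forces it into $M(S)$; performing this for every $d\in V(H)$ gives $Z^{(t+1)}\times V(H)\subseteq M(S)$, closing the induction. At $t=\tau$ this yields $V(G\square H)\subseteq M(S)$, so $S$ is a CPDS.

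The main obstacle I anticipate is exactly this mirroring step: the propagation rule in $G\square H$ is stricter than the zero forcing rule in $G$, since $(v,d)$ carries additional $H$-direction neighbours that a naive transfer of a $G$-force does not control. The inductive invariant $Z^{(t)}\times V(H)\subseteq M(S)$ is designed precisely so that, whenever $v$ is currently black in the $G$-side process, the entire fiber $\{v\}\times V(H)$ is already monitored and the extra $H$-direction neighbours cannot block the force; the reservoir $Z\times V(H)\subseteq N[S]$ that seeds the induction at $t=0$ is exactly what having the connected dominating set $D$ of $H$ in the second coordinate buys us.
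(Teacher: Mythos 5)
Your proposal is correct and follows essentially the same route as the paper: the paper also takes $X=Z_c\times S$ with $Z_c$ a connected zero forcing set of $G$ and $S$ a connected dominating set of $H$, notes the dominating step monitors every $H$-fiber over $Z_c$, and then inducts on the zero forcing stages of $G$ to show each newly forced vertex has its whole $H$-fiber monitored, with the symmetric bound by commutativity. Your write-up is in fact a bit more complete, since you explicitly verify the connectedness of $\langle Z\times D\rangle$ and explicitly dispose of the $H$-direction neighbours, points the paper leaves implicit.
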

\begin{proof}
Let $Z_c$ be a CPDS of $G$ and let $S$ be a connected dominating set of $H.$  Consider $X=Z_c\times S.$ Clearly, for each vertex $u\in Z_c,\,^uH$-fiber is observed as $\{u\}\times S\in X.$ We proceed with the proof by induction. The result is true for $i=0.$ Assume that the result holds for some $i\geq 0.$ Let $w\in Z_c^{i+1}.$ If $w\in Z_c^i,$ then $V(^wH)\in X_i$ by induction hypothesis. If $w\notin Z_c^i,$ then there exists a vertex $y\in Z_c^i$ which is the neighbour of $w$ such that $\vert N[y]\setminus Z_c^i\vert\leq 1.$ This gives $V(^yH)\in X_i,$ by induction hypothesis. Hence, for fixed $h\in V(H),\,\vert N[(y,\,h)]\setminus X_i\vert=\vert N[y]\setminus Z_c^i\vert\leq 1.$ Thus, $N[(y,\,h)]\in X_{i+1}$ which implies that $(w,\,h)\in X_{i+1}.$ As it is true for each $h\in V(H),\, V(^wH)\in X_{i+1}.$ Therefore, $\gamma_{P,c}(G \,\square\,H)\leq Z_c(G)\gamma_c(H).$ In a similar way, it is easy to prove that $\gamma_{P,c}(G \,\square\,H)\leq  Z_c(H)\gamma_c(G).$ 
\end{proof}

The upper bound in the above theorem is tight if $G$ has a universal vertex and $H\in\{P_n,\,C_n,\,W_n,\,F_n\}.$ Also, if we replace $Z_c=Z$ and $\gamma_c=\gamma$ in the above theorem, then we have the upper bound for $\gamma_P(G\square H)$ in terms of zero forcing number and domination number. 
\begin{corollary}
For any two graphs $G$ and $H,$ 
\begin{center}
$\gamma_{P}(G \,\square\,H)\leq$ min$\{Z(G)\gamma(H), Z(H)\gamma(G)\}.$
\end{center}
\end{corollary}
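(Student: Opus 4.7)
The plan is to mimic the proof of Theorem~\ref{upcpdczfs} verbatim, simply dropping every occurrence of ``connected'' on both sides. The remark preceding the corollary already tells us this should work, and indeed the induction in the theorem never used the fact that $\langle Z_c \rangle$ or $\langle S \rangle$ is connected; connectedness was only invoked afterwards to declare the resulting set a CPDS. Since for the corollary we only need a PDS, not a CPDS, the same argument should yield the desired inequality.

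Concretely, I would take $Z$ a minimum zero forcing set of $G$ (so $|Z|=Z(G)$) and $S$ a minimum dominating set of $H$ (so $|S|=\gamma(H)$), and set $X = Z \times S \subseteq V(G \square H)$. For the dominating step, since $S$ dominates $H$, for each $u \in Z$ the $H$-fiber ${}^uH$ is contained in $N[X]$, i.e. $V({}^uH) \subseteq X_1$. The inductive claim to carry is that for every $i \geq 1$, if $w \in Z^i$ (the set of vertices of $G$ colored black by the zero forcing process after step $i$), then $V({}^wH) \subseteq X_i$. The induction step is identical to that of Theorem~\ref{upcpdczfs}: if $w \notin Z^i$ but $w \in Z^{i+1}$, there is a neighbor $y \in Z^i$ of $w$ with $|N[y] \setminus Z^i| \leq 1$, so by the induction hypothesis $V({}^yH) \subseteq X_i$, and hence for each $h \in V(H)$ one has $|N[(y,h)] \setminus X_i| = |N[y] \setminus Z^i| \leq 1$, forcing $(w,h) \in X_{i+1}$.

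Because $Z$ is a zero forcing set for $G$, eventually $Z^i = V(G)$, so $X_i = V(G \square H)$ and $X$ is a PDS. Hence $\gamma_P(G\square H) \leq |X| = Z(G)\gamma(H)$. By the commutativity of the Cartesian product, swapping the roles of $G$ and $H$ yields $\gamma_P(G\square H) \leq Z(H)\gamma(G)$, and taking the minimum completes the proof.

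I do not anticipate any real obstacle here: the only thing to verify is that the induction in Theorem~\ref{upcpdczfs} truly does not rely on the connectedness of $Z_c$ or $S$, which a quick inspection confirms. The slight point worth stating explicitly is the base case, where one must note that $S$ being a dominating set (rather than a connected dominating set) is still enough to ensure $V({}^uH) \subseteq N[\{u\} \times S] \subseteq X_1$ for every $u \in Z$.
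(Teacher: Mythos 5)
Your proposal is correct and is exactly how the paper handles this corollary: the text simply remarks that replacing $Z_c$ by $Z$ and $\gamma_c$ by $\gamma$ in the proof of Theorem~\ref{upcpdczfs} gives the bound, since connectedness is never used in the domination/propagation induction, only in certifying the final set as a CPDS. Your explicit check of the base case (a dominating set of $H$ suffices to monitor each fiber ${}^uH$ with $u\in Z$) fills in the same detail the theorem's proof relies on, so nothing further is needed.
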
  

The following corollaries are immediate from Theorem \ref{upcpdczfs} as $Z_c(P_n)=1,$ $Z_c(C_n)=2,$ $Z_c(W_n)=3$ and $Z_c(F_n)=2.$
\begin{corollary}
For a graph $G,$ $\gamma_{P,c}(G \,\square\,P_n)\leq \gamma_c(G).$
\end{corollary}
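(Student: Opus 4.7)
The plan is to derive this corollary as an immediate consequence of Theorem \ref{upcpdczfs}, which we have already established. That theorem gives
\[
\gamma_{P,c}(G \,\square\, H) \;\leq\; \min\{\,Z_c(G)\gamma_c(H),\; Z_c(H)\gamma_c(G)\,\},
\]
so I would simply specialize it to $H = P_n$ and use the second term in the minimum, which will produce the desired bound $\gamma_c(G)$ provided $Z_c(P_n) = 1$.

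The one fact that needs to be in place is $Z_c(P_n) = 1$. I would briefly justify this by picking either endpoint of $P_n$, say $v_1$, and coloring it black. Since $v_1$ has only one neighbor $v_2$, the color-change rule forces $v_2$ to become black; then $v_2$ has only one white neighbor $v_3$, which then becomes black, and so on along the path. Thus $\{v_1\}$ is a zero forcing set, and being a single vertex it is trivially connected, so $Z_c(P_n) \leq 1$; since $Z_c(P_n) \geq 1$ for any nonempty graph, equality holds. This is precisely one of the four values of $Z_c$ recorded in the sentence preceding the corollary.

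Combining these gives $\gamma_{P,c}(G \,\square\, P_n) \leq Z_c(P_n)\,\gamma_c(G) = \gamma_c(G)$, which is the claim. There is no genuine obstacle here; the entire argument is a one-line specialization of Theorem \ref{upcpdczfs}, so in the write-up I would simply cite that theorem together with the value $Z_c(P_n) = 1$ and conclude.
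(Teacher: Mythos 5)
Your proposal is correct and matches the paper's own derivation: the paper obtains this corollary immediately from Theorem \ref{upcpdczfs} together with the fact $Z_c(P_n)=1$, exactly as you do. Your brief justification of $Z_c(P_n)=1$ via forcing from an endpoint is a harmless (and accurate) addition.
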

\begin{corollary}\label{cpdgboxcn}
For a graph $G,$ $\gamma_{P,c}(G \,\square\,C_n)\leq 2\gamma_c(G),$ where $\vert V(G)\vert\geq 3.$
\end{corollary}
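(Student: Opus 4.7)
The plan is to obtain the bound as a direct specialization of Theorem \ref{upcpdczfs} with $H=C_n$, so the only piece of information that has to be supplied from scratch is the value of $Z_c(C_n)$. Once that is in hand, the substitution $\gamma_{P,c}(G\square C_n)\leq Z_c(C_n)\gamma_c(G)=2\gamma_c(G)$ is immediate.

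To compute $Z_c(C_n)$, I would first argue that a single black vertex in $C_n$ always has two white neighbours, so the color-change rule never triggers; hence $Z_c(C_n)\geq 2$. For the matching upper bound, I would take two consecutive vertices $v_1,v_2$ on the cycle and colour them black. The vertex $v_2$ then has exactly one white neighbour, which becomes black; a routine induction along the cycle shows that every vertex is eventually forced. Since $v_1v_2\in E(C_n)$, the set $\{v_1,v_2\}$ is connected, giving $Z_c(C_n)\leq 2$ and therefore $Z_c(C_n)=2$.

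Plugging $Z_c(C_n)=2$ into the inequality $\gamma_{P,c}(G\square H)\leq Z_c(H)\gamma_c(G)$ supplied by Theorem \ref{upcpdczfs} yields the claimed bound. The hypothesis $|V(G)|\geq 3$ is used only to ensure that $\gamma_c(G)$ is well-defined in the connected-graph setting maintained throughout the paper and that the Cartesian product $G\square C_n$ is non-trivial; it plays no further role in the deduction.

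Because the entire content of the corollary is a substitution into an already-proved theorem, there is no genuine technical obstacle. The only non-automatic step is the short calculation $Z_c(C_n)=2$, and even there the lower and upper bounds each reduce to a one-line observation about neighbourhoods in a cycle.
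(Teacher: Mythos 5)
Your proposal is correct and follows exactly the route the paper takes: the corollary is stated as an immediate consequence of Theorem \ref{upcpdczfs} together with the fact that $Z_c(C_n)=2$, which you also verify. Your extra verification of $Z_c(C_n)=2$ is sound and simply fills in a detail the paper leaves implicit.
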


\begin{corollary}\label{cpdgboxwn}
For $n\geq 4$ and a graph $G,\,\gamma_{P,c}(G \,\square\,W_n)\leq 3\gamma_c(G),$ where $\vert V(G)\vert\geq 3.$
\end{corollary}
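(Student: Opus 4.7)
The plan is to specialise Theorem \ref{upcpdczfs} to $H = W_n$. That theorem supplies the bound $\gamma_{P,c}(G \,\square\, H) \leq Z_c(H)\,\gamma_c(G)$ as one of the two quantities in the minimum, so substituting $H = W_n$ yields $\gamma_{P,c}(G \,\square\, W_n) \leq Z_c(W_n)\,\gamma_c(G)$. It then remains only to insert the value $Z_c(W_n) = 3$ (for $n \geq 4$) to reach the stated inequality $\gamma_{P,c}(G \,\square\, W_n) \leq 3\gamma_c(G)$.

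Since the paragraph preceding the corollary already asserts $Z_c(W_n) = 3$, I would simply cite it. If a self-contained verification is wanted, I would check the upper bound $Z_c(W_n) \leq 3$ by choosing the hub $h$ together with two consecutive rim vertices $v_1,v_2$ as a trial connected zero forcing set: the vertex $v_1$ then has only $v_n$ as a white neighbour and forces it black, after which each successive rim vertex has exactly one remaining white neighbour around the cycle, so the whole wheel is blackened. For the matching lower bound $Z_c(W_n) \geq 3$, a short case check suffices: for $n \geq 4$ every vertex of $W_n$ has degree at least three, so a single black vertex cannot initiate a force; and any connected pair (hub together with a rim vertex, or two adjacent rim vertices) still leaves at least two white neighbours at each black vertex, blocking any propagation step.

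The only real content is the evaluation of $Z_c(W_n)$, and that evaluation is entirely elementary. There is no genuine obstacle: the corollary is a clean specialisation of Theorem \ref{upcpdczfs}, packaged with the standard connected zero forcing number of the wheel, and the hypothesis $|V(G)| \geq 3$ is inherited only to guarantee that $\gamma_c(G)$ is defined and nontrivial in the usual sense.
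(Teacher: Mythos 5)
Your proposal matches the paper's argument exactly: the corollary is stated there as an immediate consequence of Theorem~\ref{upcpdczfs} with the value $Z_c(W_n)=3$, which is precisely your specialisation. Your added verification that $Z_c(W_n)=3$ is correct and simply fills in a detail the paper takes for granted.
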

 %

\begin{corollary}\label{cpdgboxfn}
For a graph $G,$ $\gamma_{P,c}(G \,\square\,F_n)\leq 2\gamma_c(G),$ where $\vert V(G)\vert\geq 3$ and $n\geq 3.$
\end{corollary}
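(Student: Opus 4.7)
The plan is to derive this corollary as an immediate instance of Theorem~\ref{upcpdczfs} once we verify that $Z_c(F_n) = 2$ for every $n \geq 3$. Because Theorem~\ref{upcpdczfs} already supplies $\gamma_{P,c}(G\,\square\,H) \leq Z_c(H)\gamma_c(G)$, all that remains is to compute the connected zero forcing number of the fan on the right-hand side.

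I would begin by describing $F_n$ as the join $K_1 + P_n$: a universal vertex $u$ joined to every vertex of a path $v_1 v_2 \cdots v_n$. For the upper bound $Z_c(F_n) \leq 2$ I would take the candidate set $\{u, v_1\}$, which is connected because $u v_1$ is an edge of $F_n$, and argue by induction on $i$ that once $v_1, \ldots, v_i$ are black, the vertex $v_i$ has exactly one white neighbour (namely $v_{i+1}$, since $u$ and $v_{i-1}$ are already black), so the color-change rule forces $v_{i+1}$. After $n-1$ such propagation steps the entire fan is black. For the matching lower bound $Z_c(F_n) \geq 2$ I would appeal to the standard characterisation that $Z(G) = 1$ if and only if $G$ is a path; since $F_n$ with $n \geq 3$ contains vertices of degree at least three and is plainly not a path, $Z(F_n) \geq 2$, and because every connected zero forcing set is in particular a zero forcing set we obtain $Z_c(F_n) \geq Z(F_n) \geq 2$.

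Substituting $H = F_n$ and $Z_c(F_n) = 2$ into Theorem~\ref{upcpdczfs} then yields $\gamma_{P,c}(G \,\square\, F_n) \leq Z_c(F_n)\,\gamma_c(G) = 2\gamma_c(G)$, which is the claim. I do not anticipate any real obstacle: the only independent reasoning needed is the short computation of $Z_c(F_n) = 2$, and the hypothesis $|V(G)| \geq 3$ simply ensures that $\gamma_c(G)$ is defined (so that the theorem applies); the rest is a direct substitution into a theorem already established.
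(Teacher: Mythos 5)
Your proposal is correct and follows exactly the paper's route: the corollary is obtained by substituting $Z_c(F_n)=2$ into Theorem~\ref{upcpdczfs}, and your short verification that $Z_c(F_n)=2$ (forcing along the path from $\{u,v_1\}$ for the upper bound, and $Z_c(F_n)\geq Z(F_n)\geq 2$ since $F_n$ is not a path for the lower bound) is a sound filling-in of the fact the paper simply states.
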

As mentioned earlier, the upper bounds in the above four corollaries are tight if $G$ has a universal vertex. Some of their consequences are listed in the following table.
\begin{table}[!h]
\begin{center}
\begin{tabular}{ l l l }
\hline
Result & $G$ & $\gamma_{P,c}$ \\\hline
 Corollary \ref{cpdgboxcn} & $C_m\square K_n,\,m,\,n\geq 3 $& 2 \\ 
Corollary \ref{cpdgboxcn} & $C_m\square W_n,\,m\geq 3$ and $m\geq 4$ & 2 \\
Corollary \ref{cpdgboxcn} & $C_m\square K_{1,\,m},\,m,\,n\geq 3 $& 2 \\
Corollary \ref{cpdgboxcn} & $C_m\square F_n,\,m,\,n\geq 3 $& 2   \\
Corollary \ref{cpdgboxwn} & $W_m\square W_n,\,m,\,n\geq 4$ & 3 \\
Corollary \ref{cpdgboxwn} & $W_m\square K_{1,\,m},\,m,\,n\geq 4 $& 3 \\
Corollary \ref{cpdgboxwn} & $W_m\square K_n,\,m,\,n\geq 4$ & 3   \\
Corollary \ref{cpdgboxfn} & $F_m\square F_n,\,m,\,n\geq 3$ & 2 \\
Corollary \ref{cpdgboxfn} & $F_m\square K_n,\,m,\,n\geq 3$ & 2\\
Corollary \ref{cpdgboxfn} & $F_m\square  K_{1,\,n},\,m,\,n\geq 3$ & 2\\
Corollary \ref{cpdgboxfn} & $F_m\square W_n,\,m\geq 3$ and $n\geq 4$ &2\\\hline
\end{tabular}
\end{center}
\end{table}

 \begin{observation}\label{O1}
For any graph $G,$ $\gamma_p(G)\leq \gamma_{P,c}(G).$
\end{observation}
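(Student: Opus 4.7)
The plan is to exploit the fact that being a connected power dominating set is strictly stronger than being a power dominating set: the CPDS definition imposes the extra requirement that the induced subgraph $\langle S\rangle$ is connected, but the underlying monitoring condition $M(S)=V(G)$ is identical to that of a PDS. Hence, every CPDS is, without modification, already a PDS.

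Concretely, I would take $S$ to be a minimum CPDS of $G$, so that $|S|=\gamma_{P,c}(G)$. Because $S$ satisfies $M(S)=V(G)$ (this is part of the CPDS definition), the set $S$ is a power dominating set of $G$ once we simply forget the connectedness requirement. By the minimality of $\gamma_p(G)$ over \emph{all} power dominating sets of $G$, we immediately obtain $\gamma_p(G)\leq |S|=\gamma_{P,c}(G)$.

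There is essentially no technical obstacle here: the inequality is a one-line consequence of a set-containment argument between the family of CPDSs and the family of PDSs of $G$, and it requires no case analysis, induction, or reasoning about the propagation steps $X_i$. The only point worth emphasizing is that the inequality can be strict---a minimum PDS may induce a disconnected subgraph, forcing a minimum CPDS to be larger in order to satisfy the connectedness constraint---which is precisely what justifies investigating $\gamma_{P,c}$ as a parameter genuinely distinct from $\gamma_p$.
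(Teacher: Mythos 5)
Your argument is correct and is exactly the reasoning behind the paper's statement, which it records as an observation without proof: every connected power dominating set is in particular a power dominating set, so minimizing over the larger family of PDSs can only give a smaller or equal value. Nothing further is needed.
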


\begin{theorem}\cite{sk11}\label{pdofkmtimeskn}
For $2\leq m\leq n,$ $\gamma_p(K_m\square K_n)=m-1.$
\end{theorem}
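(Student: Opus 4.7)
The plan is to work directly with the rook-graph structure of $K_m\square K_n$, identifying each vertex with a pair $(i,j)$, $i\in[m]$, $j\in[n]$, where two vertices are adjacent iff they share a coordinate. I would prove the two inequalities separately, constructing an explicit PDS of size $m-1$ and ruling out any PDS of size $\leq m-2$.

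For the upper bound $\gamma_p(K_m\square K_n)\leq m-1$, I would take $S=\{(i,1)\,:\,1\leq i\leq m-1\}$. After the domination step, every vertex in rows $1,\ldots,m-1$ is monitored (each such row contains a vertex of $S$), together with the entire first column. So only the vertices $(m,j)$ with $2\leq j\leq n$ remain unmonitored. For each such $j$, the monitored vertex $(1,j)$ has its row-neighbours in row $1$ and its column-neighbours $(i,j)$ with $i<m$ in the fully monitored rows $1,\ldots,m-1$; its unique unmonitored neighbour is therefore $(m,j)$. Propagation adds each $(m,j)$ and $S$ is a PDS of size $m-1$.

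For the lower bound, I would suppose for contradiction that $S$ is a PDS with $|S|\leq m-2$. Let $R\subseteq[m]$ and $C\subseteq[n]$ be the sets of rows and columns meeting $S$, and let $\bar R=[m]\setminus R$, $\bar C=[n]\setminus C$. Since $|R|,|C|\leq|S|\leq m-2$, one has $|\bar R|\geq 2$ and, using the hypothesis $m\leq n$, $|\bar C|\geq n-(m-2)\geq 2$. A short check gives $N[S]=\{(i,j)\,:\,i\in R\text{ or }j\in C\}$, so the set of vertices unmonitored after the domination step is exactly the rectangle $\bar R\times\bar C$, with both sides of size at least two.

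The main obstacle, and the heart of the lower-bound argument, is to show that no propagation is ever possible from $N[S]$. A monitored vertex $(i,j)$ with $i\in R$ and $j\notin C$ has the entire set $\{(i',j)\,:\,i'\in\bar R\}$, of size $\geq 2$, among its unmonitored column-neighbours, so it cannot propagate; the case $i\notin R$, $j\in C$ is symmetric using $|\bar C|\geq 2$; and if both $i\in R$ and $j\in C$, all its neighbours are already monitored. Hence $M(S)=N[S]\subsetneq V(K_m\square K_n)$, contradicting that $S$ is a PDS, and we conclude $\gamma_p(K_m\square K_n)\geq m-1$. The subtle ingredient is precisely the use of $m\leq n$ to guarantee a second unmonitored column, without which the above blocking argument would collapse.
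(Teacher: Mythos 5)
Your proof is correct, and both directions are sound: the explicit set $\{(i,1):1\le i\le m-1\}$ together with the column-wise propagation via $(1,j)$ gives the upper bound, and the lower bound correctly identifies the unmonitored set after domination as the rectangle $\bar R\times\bar C$ and shows that every monitored vertex has either zero or at least two unmonitored neighbours (using $|\bar R|\ge 2$ and, via $n\ge m$, $|\bar C|\ge 2$), so the monitored set never grows. Note that the paper itself does not prove this statement; it imports it from the cited reference (Soh and Koh), so your argument serves as a self-contained replacement for that citation rather than a variant of a proof given in the paper. The one point worth making explicit in a final write-up is the observation you already use implicitly: since no vertex can force at the very first propagation step and the monitored set is unchanged, the same analysis applies at every later step, hence $M(S)=N[S]$ permanently.
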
 

\begin{theorem}
For $2\leq m\leq n,$ $\gamma_{P,c}(K_m\square K_n)=m-1.$
\end{theorem}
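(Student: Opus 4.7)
The plan is to sandwich $\gamma_{P,c}(K_m \square K_n)$ between $m-1$ and $m-1$. For the lower bound I invoke Observation \ref{O1} together with Theorem \ref{pdofkmtimeskn}, which immediately gives
\[
\gamma_{P,c}(K_m \square K_n) \;\geq\; \gamma_p(K_m \square K_n) \;=\; m-1.
\]

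For the upper bound, I would exhibit an explicit CPDS of size $m-1$. Label the vertices of $K_m \square K_n$ by pairs $(i,j)$ with $1 \leq i \leq m$ and $1 \leq j \leq n$, and set
\[
S \;=\; \{(i,1) : 1 \leq i \leq m-1\}.
\]
All elements of $S$ lie in the single $K_m$-fiber $K_m^{\,1}$, and since $K_m$ is complete the induced subgraph $\langle S\rangle$ is $K_{m-1}$, so $S$ is connected. For the domination step, each $(i,1) \in S$ dominates all of row $i$ (its $K_n$-fiber) together with all of column $1$ (its $K_m$-fiber), so $N[S]$ contains every vertex except possibly those of the form $(m,j)$ with $j \geq 2$.

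It then remains to check that propagation handles the $n-1$ unmonitored vertices $(m,2), (m,3), \ldots, (m,n)$. Fix $j \geq 2$ and look at $(1,j)$, which is already monitored. Its neighbours are the vertices $(k,j)$ with $k \neq 1$ and the vertices $(1,j')$ with $j' \neq j$; all of the latter lie in row $1$ and hence are monitored, and among the former every $(k,j)$ with $2 \leq k \leq m-1$ lies in row $k \leq m-1$ and is also monitored. Thus $(m,j)$ is the unique unmonitored neighbour of $(1,j)$, so the propagation rule forces it. Repeating this for each $j$ completes the monitoring, giving $\gamma_{P,c}(K_m \square K_n) \leq m-1$.

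I do not anticipate a serious obstacle: the construction is clean, and the main point is merely to verify that after dominating $m-1$ rows and one column, each remaining column contains exactly one white vertex with many monitored neighbours in its row, so one step of the color-change rule finishes the job.
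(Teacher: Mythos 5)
Your proposal is correct and follows exactly the paper's approach: the lower bound via Observation \ref{O1} and Theorem \ref{pdofkmtimeskn}, and the upper bound via the same set $S=\{(v_1,u_1),\dots,(v_{m-1},u_1)\}$, for which you merely spell out the domination and propagation details the paper leaves as ``easy to observe.''
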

\begin{proof}
By Theorem \ref{pdofkmtimeskn} and Observation \ref{O1}, we have $m-1\leq \gamma_{P,c}(K_m\square K_n).$ Let $V(K_m)=\{v_1,\,v_2,\,\dots,\,v_m\}$
and $V(K_n)=\{u_1,\,u_2,\,\dots,\,u_n\}.$ It is easy to observe that the set $S=\{(v_1,\,u_1),\,(v_2,\,u_1),\,\dots,\,(v_{m-1},\,u_1)\}$ is a CPDS of $K_m\square K_n.$ Thus, $\gamma_{P,c}(K_m\square K_n) = m-1$ as $\vert S\vert=m-1.$\end{proof}

\begin{theorem}\cite{ks16}\label{pdkmtimesk1,n}
For $m,\,n\geq 3,$ $\gamma_{P}(K_m\square K_{1,\,n})=min\{m-1,\,n-1\}.$
\end{theorem}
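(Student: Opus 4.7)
The plan is to match an upper bound construction with a lower bound obstruction argument. Writing $V(K_m) = \{v_1, \dots, v_m\}$ and $V(K_{1,n}) = \{c, u_1, \dots, u_n\}$ with $c$ the center of the star, I would split into two cases for the upper bound. When $m \leq n$, I would take $S = \{(v_i, c) : 1 \leq i \leq m-1\}$; the domination step covers the entire center fiber $K_m^c$ (any $m-1$ vertices of $K_m$ form a dominating set) together with the rows $(v_i, u_k)$ for $i \leq m-1$, leaving only the vertices $(v_m, u_k)$, $1 \leq k \leq n$, unmonitored. Each of these is the unique unmonitored neighbor of the monitored vertex $(v_1, u_k)$, so a single propagation round finishes. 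When $n \leq m$, I would take $S = \{(v_1, u_i) : 1 \leq i \leq n-1\}$; a two-round propagation then suffices, first bringing in $(v_j, c)$ for each $j \neq 1$ from $(v_j, u_1)$ (which has $(v_j, c)$ as its unique unmonitored neighbor), and then bringing in $(v_j, u_n)$ from $(v_j, c)$. In either case $|S| = \min\{m-1, n-1\}$.

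For the lower bound I would argue by contradiction: suppose $S$ is a PDS with $|S| \leq \min\{m, n\} - 2$. The projection of $S$ onto $V(K_m)$ omits at least two vertices $v_a, v_b$, so the $H$-fibers $^{v_a}K_{1,n}$ and $^{v_b}K_{1,n}$ contain no element of $S$. Likewise, the projection of $S$ onto the leaves of $K_{1,n}$ omits at least two leaves $u_1, u_2$, so the $G$-fibers $K_m^{u_1}, K_m^{u_2}$ are $S$-free. I would then focus on the four corner vertices $A_1 = (v_a, u_1)$, $A_2 = (v_a, u_2)$, $B_1 = (v_b, u_1)$, $B_2 = (v_b, u_2)$; every neighbor of each corner lies in one of the four $S$-free fibers, so all four corners lie outside $N[S]$.

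The crux will be a cyclic prerequisite structure on the propagation of these corners. The claim is that $A_1$ can be added to $M(S)$ only after $B_1$ or $A_2$ has been. A valid propagator $w$ of $A_1$ is either $(v_a, c)$ or some $(v_k, u_1)$ with $k \neq a$. In the first case $A_2 \in N(w)$, so $A_2$ must already be monitored for $A_1$ to be the unique unmonitored neighbor of $w$. In the second case, either $k = b$ and $w = B_1$, which must itself be monitored to act as a propagator, or $k \neq b$ and $B_1 \in N(w)$, which likewise forces $B_1$ to be monitored first. By the obvious symmetry, $A_2$ depends on $\{B_2, A_1\}$, $B_1$ on $\{A_1, B_2\}$, and $B_2$ on $\{A_2, B_1\}$. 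Whichever corner is monitored first in the propagation order forces one of its two prerequisites to have been monitored strictly earlier, a contradiction. Hence $\gamma_P(K_m \square K_{1,n}) \geq \min\{m-1, n-1\}$.

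The main obstacle will be the boundary case $k = b$ in the propagator analysis, where $w$ coincides with $B_1$ rather than being a strict neighbor of $B_1$; there one uses the simple observation that any propagator is itself already monitored, so $B_1$ still precedes $A_1$ in the propagation order. Beyond this subtlety, the remainder is a direct verification of domination and single-neighbor propagation for the two explicit sets in the upper bound, together with a straightforward counting argument for the existence of the two empty rows and two empty leaves in the lower bound.
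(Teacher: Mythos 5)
Your proposal is correct, but there is nothing in the paper to compare it against: Theorem~\ref{pdkmtimesk1,n} is stated as a quoted result from \cite{ks16} and the paper gives no proof of it (it is only used as the lower-bound ingredient in the following theorem on $\gamma_{P,c}(K_m\square K_{1,n})$). Judged on its own, your argument is a valid self-contained proof. The two constructions for the upper bound work exactly as you say: for $m\leq n$ the set $\{(v_i,c):i\leq m-1\}$ dominates everything except the row $\{(v_m,u_k)\}$, and each $(v_1,u_k)$ then forces $(v_m,u_k)$; for $n\leq m$ the set $\{(v_1,u_i):i\leq n-1\}$ leads to the two-stage propagation through the center fiber (your description of the second stage should be read as applying to all $j$, including $j=1$, since $(v_1,c)$ is monitored from the domination step -- a cosmetic point, not a gap). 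The lower bound via the four corners $(v_a,u_1),(v_a,u_2),(v_b,u_1),(v_b,u_2)$ is sound: with $|S|\leq\min\{m,n\}-2$ the two star-fibers and two leaf-fibers free of $S$ exist, every neighbor of a corner lies in one of these four $S$-free fibers so no corner is dominated, and the propagator analysis (the only possible forcers of $(v_a,u_1)$ are $(v_a,c)$, which needs $(v_a,u_2)$ already monitored, or some $(v_k,u_1)$, which either is $(v_b,u_1)$ itself or needs $(v_b,u_1)$ already monitored) yields the cyclic dependency that kills the first corner to be forced. This is essentially the standard ``trapped $2\times 2$ block'' obstruction used elsewhere in this paper (e.g.\ in the proofs for $K_m\square K_{1,n}$ and $K_{1,x}\square K_{1,y}$ in Section~3), so your route is consistent in spirit with the techniques the authors use, even though they do not prove this particular statement themselves.
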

\begin{theorem}
For $m,\,n\geq 3,$ $\gamma_{P,c}(K_m\square K_{1,\,n})=min\{m-1,\,n\}.$
\end{theorem}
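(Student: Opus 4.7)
The plan is to match an explicit upper-bound construction with a lower bound that refines the one inherited from Theorem~\ref{pdkmtimesk1,n}. Write $c$ for the centre of $K_{1,n}$ and $\ell_1,\ldots,\ell_n$ for its leaves, and let $V(K_m)=\{v_1,\ldots,v_m\}$. For the upper bound I would split on which of $m-1$ and $n$ is smaller. If $m-1\le n$, I take $S=\{(v_i,c):1\le i\le m-1\}$; this sits inside the central $K_m$-fibre, is connected of size $m-1$, dominates every vertex except the $(v_m,\ell_j)$'s, and then each already-observed $(v_i,\ell_j)$ with $i<m$ has $(v_m,\ell_j)$ as its unique unobserved neighbour in the $\ell_j$-fibre, so one propagation round completes. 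If instead $n\le m-1$, I take $S=\{(v_1,c)\}\cup\{(v_1,\ell_j):1\le j\le n-1\}$, a star of size $n$ inside the $v_1$-$K_{1,n}$-fibre; after domination only the vertices $(v_i,\ell_n)$ with $i\ge 2$ remain unobserved, and each central vertex $(v_i,c)$ with $i\ge 2$ has $(v_i,\ell_n)$ as its unique unobserved neighbour, finishing the monitoring.

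For the lower bound, Theorem~\ref{pdkmtimesk1,n} combined with Observation~\ref{O1} already gives $\gamma_{P,c}(K_m\square K_{1,n})\ge\min\{m-1,n-1\}$, which handles the case $n\ge m$ at once. The remaining and harder task is to rule out a CPDS of size exactly $n-1$ when $n\le m-1$. I would assume such an $S$ exists and introduce the index sets $I=\{i:(v_i,c)\in S\}$ and $J=\{j:S\cap(V(K_m)\times\{\ell_j\})\ne\emptyset\}$, so that $|I|+|J|\le|S|=n-1\le m-2$. Since distinct leaf-fibres are joined only through the central fibre, connectedness of $\langle S\rangle$ forces that whenever $I=\emptyset$ one has $|J|=1$.

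The main obstacle will be a careful inspection of which observed vertex could trigger the first propagation step. After the domination step the unobserved set is exactly $\{(v_{i'},\ell_j):i'\notin I,\,j\notin J\}$, and scanning the observed vertex types one sees that only two kinds can host a propagation: a central-fibre vertex $(v_{i'},c)$ with $i'\notin I$ has $n-|J|$ unobserved neighbours, and a leaf-fibre vertex $(v_{i'},\ell_j)$ with $i'\in I$ and $j\notin J$ has $m-|I|$. Hence propagation can even begin only if $|J|=n-1$ or $|I|=m-1$. In the first case $I\ne\emptyset$ (otherwise $|J|=1<n-1$ since $n\ge 3$), so $|S|\ge 1+(n-1)=n$; in the second case $|S|\ge m-1\ge n$. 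Both contradict $|S|=n-1$, ruling out such an $S$ and yielding $\gamma_{P,c}(K_m\square K_{1,n})\ge n$ in the regime $n\le m-1$, which combined with the upper bound gives the stated equality $\min\{m-1,n\}$.
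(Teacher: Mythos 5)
Your upper-bound constructions are correct, and so is the reduction of the case $n\ge m$ to Theorem~\ref{pdkmtimesk1,n} together with Observation~\ref{O1} (the paper gets the upper bound from Theorem~\ref{upcpdczfs} instead, and for $m>n+1$ blocks propagation on a block of untouched rows and columns, so your route is close in spirit but more explicit). However, your lower-bound count for $n\le m-1$ has a genuine gap in the subcase $I=\emptyset$. There connectedness does force $|J|=1$, say $S\subseteq V(K_m)\times\{\ell_{j_0}\}$; but then your description of the dominated set is wrong: the central vertices $(v_i,c)$ whose row does not meet $S$ are \emph{not} dominated (only the at most $|S|$ central vertices sitting above rows of $S$ are), and, more importantly, propagation \emph{does} begin: each observed leaf vertex $(v_k,\ell_{j_0})$ whose central partner $(v_k,c)$ is unobserved sees all of column $\ell_{j_0}$ observed and hence has $(v_k,c)$ as its unique unobserved neighbour, so the whole central fibre gets forced even though $|J|=1<n-1$ and $|I|=0<m-1$. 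Thus the claimed dichotomy ``propagation can begin only if $|J|=n-1$ or $|I|=m-1$'' is false in this subcase, and the contradiction you draw from it does not cover it.

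The omission is repairable but needs an extra step: once the central fibre has been forced, every observed vertex has either $0$ unobserved neighbours (the leaf vertices of column $\ell_{j_0}$) or $n-1\ge 2$ of them (each central vertex still misses its entire row outside column $\ell_{j_0}$), so monitoring stalls and the leaf vertices outside column $\ell_{j_0}$ are never observed; hence no set avoiding the central $K_m$-fibre is a CPDS at all. Establishing this as a preliminary claim (it is essentially the paper's first observation in its Case 2, that a minimum CPDS must contain a vertex of the form $(i,0)$) and then running your $I\ne\emptyset$ count, which is correct as written, closes the argument. A further small point: ruling out $|S|=n-1$ exactly yields $\gamma_{P,c}\ge n$ only via the (easy, but unstated) fact that a CPDS can be enlarged while remaining a CPDS; alternatively, note that your counting works verbatim for every $|S|\le n-1$.
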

\begin{proof}
Let $V(K_m)=Z_m$ and $V(K_{1,n})=Z_{n+1},$ where the vertex $0$ is the universal vertex of $K_{1,\,n}.$ Then $V(K_m\square K_{1,\,n})=Z_m\times Z_{n+1}.$ 

\noindent {\bf Case 1:} $m\leq n+1$

By Theorem \ref{upcpdczfs}, we have $\gamma_{P,c}(K_m\square K_{1,\,n}) \leq m-1$ as $Z_c(K_m)=m-1$ and $\gamma_c(K_{1,\,n})=1.$ By Theorem \ref{pdkmtimesk1,n} and Observation \ref{O1}, $m-1\leq \gamma_{P,c}(K_m\square K_{1,\,n}).$ Hence, $\gamma_{P,c}(K_m\square K_{1,\,n})= m-1.$

\noindent {\bf Case 2:} $m>n+1$


 Since $\gamma(K_m)=1$ and $Z_c(K_{1,n})=n,\,\gamma_{P,c}(K_m\square K_{1,\,n}) \leq n$ (By Theorem \ref{upcpdczfs}). To prove the lower bound, first we need to observe that any minimum CPDS $X$ of $K_m\square K_{1,\,n}$ must contains at least one of the vertices of the form $(i,\,0)$ for some $i\in Z_m;$ otherwise, all the vertices in any CPDS $X \subset V(K_m^j),$ for some fixed $j,$ where $j\in (Z_m\setminus \{0\}),$ and hence $\vert X \vert >n$ as $m>n+1.$ Suppose there exists a minimum CPDS $X$ of $K_m\square K_{1,\,n}$ with $\vert X \vert \leq  n-1.$ Then the vertices in at least three $^iK_{1,\,n}$-fiber and two $K_m^j$-fiber do not belong to $X.$ WLOG let $i\in\{m-1,\,m,\,m+1\}$ and $j\in \{n-1,\,n\}.$  Let $A= \{(i,\,j)\,\vert\, i\in\{m-1,\,m,\,m+1\}\,\,\mbox{and}\,\,j\in \{n-1,\,n\} \}.$ Since $\vert N(x)\cap A\vert > 1$ for any vertex $x\notin X$ and $x\in N(A)\setminus A,$ propagation is not possible to observe any vertices in the set $A.$ This leads to the contradiction for the cardinality of the minimum CPDS is $n-1.$ Thus, $\gamma_{P,c}(K_m\square K_{1,\,n}) \geq n.$ This completes the proof.

From Case $1$ and $2,$ we have $\gamma_{P,c}(K_m\square K_{1,\,n})=min\{m-1,\,n\}.$
\end{proof}

\begin{theorem}
For $3\leq x\leq y,\,\gamma_{P,\,c}(K_{1,\,x}\square K_{1,\,y})=x.$
\end{theorem}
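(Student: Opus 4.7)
The plan is to prove $\gamma_{P,c}(K_{1,x}\square K_{1,y})=x$ by matching upper and lower bounds. I write the vertices of $K_{1,x}\square K_{1,y}$ as pairs $(i,j)\in\{0,\ldots,x\}\times\{0,\ldots,y\}$, with $0$ the centre of each star, and call $(0,0)$ the \emph{centre}, $(i,0)$ and $(0,j)$ with $i,j\ge 1$ the \emph{axis} vertices, and $(i,j)$ with $i,j\ge 1$ the \emph{corners}; every corner has only the two neighbours $(i,0)$ and $(0,j)$, and distinct axis vertices of the same type are pairwise non-adjacent.

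The upper bound follows from Theorem~\ref{upcpdczfs} with $G=K_{1,x}$ and $H=K_{1,y}$: $\gamma_c(K_{1,y})=1$ and $Z_c(K_{1,x})=x$ for $x\ge 3$ (any CZFS of a star must include the centre together with at least $x-1$ of the leaves, otherwise the centre keeps $\ge 2$ white neighbours and propagation stalls), so $\gamma_{P,c}(K_{1,x}\square K_{1,y})\le x$. The same bound is realised explicitly by the CPDS $\{(0,0),(1,0),\ldots,(x-1,0)\}$, whose column-propagation from $(0,j)$ handles the unique remaining column $\{(x,j):j\ge 1\}$.

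For the lower bound I assume, towards a contradiction, that $S$ is a CPDS with $|S|\le x-1$. Writing $A=\{i\ge 1:(i,0)\in S\}$, $B=\{j\ge 1:(0,j)\in S\}$, $S_C$ the set of corners in $S$, $a=|A|$, $b=|B|$, $c=|S_C|$, $\bar a=x-a$, $\bar b=y-b$, I split on whether $(0,0)\in S$. In Case~1, $(0,0)\in S$, so every axis vertex is already dominated and the only undominated vertices are the corners in $(\bar A\times\bar B)\setminus S_C$; the propagation step then reduces to the matrix rule ``fill the last unfilled cell of any row or column of the $\bar a\times\bar b$ sub-grid''. The key auxiliary claim is that this rule completes the sub-grid if and only if the unfilled cells, viewed as edges of $K_{\bar a,\bar b}$, form a forest: each propagation step corresponds to removing a leaf of the unfilled-edge bipartite graph, while any bipartite cycle of unfilled edges persists as a stuck configuration. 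Since the largest forest in $K_{\bar a,\bar b}$ has $\bar a+\bar b-1$ edges, this forces $c\ge(\bar a-1)(\bar b-1)$ whenever $\bar a,\bar b\ge 2$, hence $|S|\ge 1+a+b+(x-a-1)(y-b-1)$; the right-hand side is minimised at $(a,b)=(x-2,y-2)$ with value $x+y-2\ge x+1$, contradicting $|S|\le x-1$ since $y\ge 3$, while the boundary cases $\bar a\le 1$ or $\bar b\le 1$ give $a\ge x-1$ or $b\ge y-1\ge x-1$ directly. In Case~2, $(0,0)\notin S$: if $a=0$ or $b=0$, then connectivity of $\langle S\rangle$ plus absence of the centre confine $S$ to a single row or column fibre, and a direct propagation check excludes any such $S$ of size $\le x-1$; otherwise $a,b\ge 1$, so $(0,0)\in N[S]$ and for each $i\ge 1$ any corner $(i,j)$ with $j\in B$ is dominated and propagates to its unique other neighbour $(i,0)$ (and symmetrically for columns), after which every axis vertex is monitored and the situation collapses onto the Case~1 sub-grid problem. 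The same forest bound now yields $|S|\ge a+b+(x-a-1)(y-b-1)\ge x+y-3\ge x$.

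The main obstacle is the auxiliary \emph{forest lemma} for the matrix propagation: proving that the row/column ``one-white-left'' rule completes an $m\times n$ grid if and only if the initially unfilled cells form an edge forest of $K_{m,n}$. The forward direction is the observation that each propagation deletes a leaf of the white-edge bipartite graph; the reverse direction requires showing that any bipartite cycle of white edges is a stuck pattern, i.e.\ every row or column meeting it meets it in at least two cells. Once this lemma is in hand, the remainder of the lower bound is a routine extremal optimisation in $(a,b)$.
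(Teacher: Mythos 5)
Your proof is correct, but your lower bound goes by a genuinely different route than the paper's. The paper argues by pigeonhole: a connected set of size $x-1$ must miss at least two leaf-fibers of $K_{1,x}$ and two leaf-fibers of $K_{1,y}$, and the four corner vertices $(a,b)$ determined by those missed fibers are undominated and can never be reached by propagation, since each of their only possible forcers $(a,0)$, $(0,b)$ retains at least two white neighbours among the four corners. In your language, the paper simply exhibits one $4$-cycle of white cells, i.e.\ the minimal stuck configuration, whereas you prove the full characterization: after reducing to the sub-grid of undominated corners, propagation succeeds if and only if the white cells form a forest in $K_{\bar a,\bar b}$ (pendant-edge removal versus the nonempty $2$-core), and you then run an extremal count $\vert S\vert\ge a+b+c+[(0,0)\in S]$ with $c\ge(\bar a-1)(\bar b-1)$. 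Your approach costs more work (the forest lemma, the case split on the centre and on $a=0$ or $b=0$ with the single-fiber connectivity argument, and the optimization in $(a,b)$), but it buys a structural description of exactly which connected sets are CPDS's, which the paper's four-corner trick does not give; the paper's argument is shorter and suffices for the statement. One small caution: in Case~2 your chain $a+b+(x-a-1)(y-b-1)\ge x+y-3\ge x$ is only valid in the interior regime $x-a-1\ge 1$, $y-b-1\ge 1$ (for instance $a=x-1$, $b=1$, $y>3$ violates the first inequality); the boundary cases $a\ge x-1$ or $b\ge y-1$ must be disposed of separately via $\vert S\vert\ge a+b\ge x$, exactly as you already did for the corresponding boundary in Case~1, so this is a presentational fix rather than a gap.
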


\begin{proof}
Let $V(K_{1,\,x})=Z_x$ and $V(K_{1,\,y})=Z_y.$ Consider the vertex with label $0$ is the universal vertex of the graph $K_{1,\,x}$ (respectively, $K_{1,\,y}$). By Theorem \ref{upcpdczfs}, we have $\gamma_{P,c}(K_{1,\,x}\square K_{1,\,y}) \leq x$ as $Z_c(K_{1,\,x})=x$ and $\gamma_c(K_{1,\,y})=1.$ 

To attain the lower bound, we claim that any set $X\subset V(K_{1,\,x}\square K_{1,\,y})$ with cardinality $x-1$ does not satisfy the CPDS condition. Note that any minimum CPDS contains at least one of the vertex of the form $(0,\,i)$ or $(j,\,0);$ otherwise, the connected condition fails. Suppose $X$ is a minimum CPDS of $K_{1,\,x}\square K_{1,\,y}$ with size $x-1.$ Since $\vert X\vert =x-1,$ the vertices in at least two $^iK_{1,\,y}$-fiber and two $K_{1,\,x}^j$-fiber do not belong to $X.$ WLOG let $i\in\{x-1,\,x\}$ and $j\in \{y-1,\,y\}.$ Let $Y=\{(a,\,b): a\in\{x-1,\,x\}\,\,\mbox{and}\,\,b\in\{y-1,\,y\} \}.$ It is clear that the vertices in  $Y$ are monitored only by propagation set. But it is not possible as $\vert N((0,\,b))\cap Y\vert > 1$ and $\vert N((a,\,0))\cap Y\vert > 1.$ Which is a contradiction for $\vert X\vert=x-1.$ Hence, $\gamma_{P,\,c}(K_{1,\,x}\square K_{1,\,y})=x.$
\end{proof} 

\begin{theorem}
Let the order of two graphs $G$ and $H$ be at least four and let $\gamma(G)=1.$ $Z_c(H)=2$ if and only if $\gamma_{P,c}(G \square H)=2.$
\end{theorem}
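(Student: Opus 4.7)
The plan is to prove each direction of the biconditional separately, relying on Theorem~\ref{upcpdczfs} for the upper bounds and on a direct analysis of the propagation rule for the lower bounds. Throughout, note that $\gamma(G)=1$ implies $\gamma_c(G)=1$, since a universal vertex of $G$ is on its own a connected dominating set.

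For the direction $Z_c(H)=2\Rightarrow\gamma_{P,c}(G\,\square\,H)=2$, the inequality $\gamma_{P,c}(G\,\square\,H)\leq Z_c(H)\gamma_c(G)=2$ follows at once from Theorem~\ref{upcpdczfs}. For the matching lower bound I would assume, for contradiction, that some singleton $\{(a,b)\}$ is a power dominating set. The initially monitored set is $N_G[a]\times\{b\}\cup\{a\}\times N_H[b]$; I would enumerate the two essentially different candidate propagators, namely $(g,b)$ with $g\in N_G[a]\setminus\{a\}$ and $(a,h)$ with $h\in N_H(b)$, counting in each case the white neighbors in both the $G$- and $H$-directions. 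Using $|V(G)|,|V(H)|\geq 4$ together with the existence of a universal vertex of $G$, one checks that propagation can proceed only if $\{b\}$ is itself a zero forcing set of $H$; this would force $Z(H)=1$, hence $H\cong P_n$, hence $Z_c(H)=1$, contradicting $Z_c(H)=2$.

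For the direction $\gamma_{P,c}(G\,\square\,H)=2\Rightarrow Z_c(H)=2$, let $X$ be a minimum CPDS, necessarily of size $2$. Connectedness forces $X=\{(a,b_1),(a,b_2)\}$ with $b_1b_2\in E(H)$ (Case A) or $X=\{(a_1,b),(a_2,b)\}$ with $a_1a_2\in E(G)$ (Case B). I would first rule out $Z_c(H)=1$: if $H\cong P_n$, the corollary $\gamma_{P,c}(G\,\square\,P_n)\leq\gamma_c(G)=1$ contradicts $\gamma_{P,c}=2$, so $Z_c(H)\geq 2$. To show $Z_c(H)\leq 2$, in Case~A I would show that $\{b_1,b_2\}$ is itself a CZFS of $H$: for any $g\in N_G(a)$, the fibre $^gH$ propagates in lock-step with the zero-forcing cascade on $H$ seeded by $\{b_1,b_2\}$, and the success of $X$ as a PDS of $G\,\square\,H$ forces that cascade to blacken all of $V(H)$. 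Case~B is handled by a symmetric argument exchanging the $G$- and $H$-directions, or by first replacing $X$ with a Case-A CPDS of the same size using the universal vertex of $G$.

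The main obstacle lies in the reverse direction, where one must verify that a ``shortcut'' propagation along $G$-fibres cannot extend the monitored set to all of $V(G\,\square\,H)$ when the zero-forcing cascade in $H$ from $\{b_1,b_2\}$ stops prematurely. The hypotheses $|V(G)|,|V(H)|\geq 4$ and the existence of a universal vertex of $G$ are precisely what rule out such shortcuts; the same considerations also control the forward direction's lower bound.
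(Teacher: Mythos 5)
Your outline reproduces the paper's architecture: the upper bound from Theorem \ref{upcpdczfs}, the exclusion of $H\cong P_m$ via $\gamma_{P,c}(G\,\square\,P_n)\le\gamma_c(G)=1$, and the split of a size-two CPDS into a pair lying in one $H$-fiber (your Case A) or in one $G$-fiber (your Case B). The trouble is that the steps carrying the actual content are left as assertions. In Case A the claim that each row $\{g\}\times V(H)$ propagates ``in lock-step'' with the zero-forcing cascade of $\{b_1,b_2\}$ in $H$ is precisely what has to be proved: the monitored set is not of the product form $V(G)\times B$, since after the domination step the row $\{a\}\times V(H)$ already contains $N_H[b_1]\cup N_H[b_2]$ while rows with $g\notin N_G[a]$ are entirely white, so forces inside the row of $a$ can occur even when the cascade from $\{b_1,b_2\}$ in $H$ is stalled, and one must show this surplus can never be transferred to the other rows (this is where $\vert V(G)\vert\ge 4$ and the universal vertex must be used: a row other than that of $a$ can receive its first black vertex only through a column force, which requires every other row to be black in that column). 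Declaring that the hypotheses ``are precisely what rule out such shortcuts'' is a statement of intent, not an argument; as written, the central implication $\gamma_{P,c}(G\,\square\,H)=2\Rightarrow Z_c(H)\le 2$ is unproved.

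Case B is not handled at all. The ``symmetric argument'' applied to $X=\{(a_1,b),(a_2,b)\}$ would show that $\{a_1,a_2\}$ forces in $G$, i.e.\ it bounds $Z_c(G)$, which is not the conclusion the theorem needs, and the hypotheses are not symmetric in $G$ and $H$ (only $\gamma(G)=1$ is assumed). The fallback of ``replacing $X$ with a Case-A CPDS of the same size'' is unsubstantiated. What is actually required --- and what the paper does, however tersely --- is to show that a pair inside one $G$-fiber cannot monitor $G\,\square\,H$ at all when $H\not\cong P_m$: every row $\{g\}\times V(H)$ with $g\notin\{a_1,a_2\}$ meets the dominated set only in column $b$, and $(g,b)$ has $\deg_H(b)$ white neighbours in its own row, so leaving column $b$ forces a path-like structure on $H$. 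Finally, in the forward direction your intermediate claim that a singleton $\{(a,b)\}$ can propagate only if $\{b\}$ is a zero forcing set of $H$ is false when $a$ is not the universal vertex: if $\deg_G(a)=1$ and some $h\in N_H(b)$ satisfies $N_H(h)\subseteq N_H[b]$ (take $H$ to be the diamond, $b$ a vertex of degree $3$), then $(a,h)$ does force, although $H$ is not a path; the correct statement is that the process can never monitor the rows indexed by the remaining vertices of $G$, which again requires the row-versus-column analysis above. So the route is the paper's, but the two lower-bound verifications that make the theorem true are missing from the proposal.
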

\begin{proof}
By hypothesis and Theorem \ref{upcpdczfs}, $\gamma_{P,c}(G \square H)\leq 2.$ Also, $\gamma_{P,c}(G \square H) > 1$ as $Z_c(H)=2.$ Hence $\gamma_{P,c}(G \square H) = 2.$

Conversely, assume that $\gamma(G)=1$ and $\gamma_{P,c}(G\square H)=2.$ By our assumption, it is clear that $H\not\cong P_m.$ Let $v$ be a universal vertex of $G$ and let $X$ be a CPDS for $G\square H.$ If $(a,\,b)$ and $(c,\,d)$ are the vertices in $X,$ then $a=c=v$ and $b\neq d$ as $\langle X \rangle \cong K_2;$ otherwise $a\neq b$ and $b=d,$ then the vertices in $G \square H$ cannot  be observed by  propagation as $H\not\cong P_m.$ Consequently, propagation occurs from one $G$-fiber to another $G$-fiber only if $Z_c(H)\leq 2.$ Since $H\not\cong P_m,$ $Z_c(H) > 1.$ Thus, $Z_c(H)=2.$
\end{proof}

\begin{theorem}
Let $\gamma(G)=1$ and let $H=G\circ \overline{K_n}.$ For $n,\,m\geq 2,\,\gamma_{P,\,c}(H\square P_m)=2.$
\end{theorem}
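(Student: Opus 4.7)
The plan is to sandwich $\gamma_{P,c}(H \,\square\, P_m)$ between $2$ and $2$. For the upper bound I will apply Theorem \ref{upcpdczfs}; since $Z_c(P_m) = 1$, it is enough to prove $\gamma_c(H) \leq 2$. With $u$ a universal vertex of $G$, any other vertex $w \in V(G) \setminus \{u\}$, and any fixed $y_0 \in V(\overline{K_n})$, the pair $\{(u, y_0), (w, y_0)\}$ induces a $K_2$ in $H$ (because $uw \in E(G)$, and edges of $H$ use only the $G$-coordinate since $\overline{K_n}$ has no edges) and dominates $V(H)$: $(u, y_0)$ covers every $(g, y)$ with $g \neq u$, and $(w, y_0)$ covers every $(u, y)$. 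Since no two vertices in the same $G$-fiber of $H$ are adjacent, $H$ has no universal vertex, so $\gamma_c(H) = 2$, and Theorem \ref{upcpdczfs} then gives $\gamma_{P,c}(H \,\square\, P_m) \leq 2$.

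For the lower bound, by Observation \ref{O1} it suffices to rule out single-vertex PDSs. Fix $s = ((g_0, y_0), i_0)$; then $M(\{s\})$ equals the union of $\{s\}$, the full column-$i_0$ fibers over $N_G(g_0)$, and the two $P_m$-neighbors $((g_0, y_0), i_0 \pm 1)$ (whenever they exist). I will verify that every $v \in M(\{s\})$ has either $0$ or at least $2$ unmonitored neighbors, which prevents propagation from ever starting.

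I plan to split $v$ into three types. When $v = s$, all its neighbors are already in $M(\{s\})$. When $v = ((g, y), i_0)$ with $g \in N_G(g_0)$, the relation $g_0 \in N_G(g)$ supplies the $n - 1$ unmonitored $H$-neighbors $((g_0, y'), i_0)$ for $y' \neq y_0$, and the inequality $g \neq g_0$ makes the $P_m$-neighbor $((g, y), i_0 \pm 1)$ unmonitored as well, so $v$ has at least $n \geq 2$ unmonitored neighbors. When $v = ((g_0, y_0), i_0 \pm 1)$, every $H$-neighbor $((g, y), i_0 \pm 1)$ with $g \in N_G(g_0)$ and $y \in V(\overline{K_n})$ lies outside $M(\{s\})$, giving at least $n \cdot |N_G(g_0)| \geq 2$ unmonitored neighbors.

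The main obstacle is the middle case, where I must combine the $n - 1 \geq 1$ unmonitored vertices arising from the non-edges inside the $g_0$-fiber with the unmonitored $P_m$-direction neighbor to reach a count of at least two; this is precisely where the hypothesis $n \geq 2$ is essential and where the structure $H = G \circ \overline{K_n}$ (rather than a general graph) is being used.
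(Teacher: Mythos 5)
Your proof is correct, and the upper bound is exactly the paper's: show $\gamma_c(H)=2$ (no universal vertex in $G\circ\overline{K_n}$ when $n\geq 2$, while a universal vertex of $G$ together with any neighbour dominates) and invoke Theorem \ref{upcpdczfs} with $Z_c(P_m)=1$. Where you diverge is the lower bound. The paper disposes of it in one clause, ``$\gamma_{P,c}(H\square P_m)>1$ as $\gamma(H)\neq 1$,'' which as a general implication is not valid: for instance $\gamma(P_4)=2$, yet a single corner vertex power-dominates $P_4\square P_m$. What actually makes the one-vertex case fail here is the blow-up structure of $H=G\circ\overline{K_n}$, and your argument supplies precisely that: you compute $M(\{s\})=N[s]$ for an arbitrary single vertex $s=((g_0,y_0),i_0)$ and check that every monitored vertex has $0$ or at least $2$ unmonitored neighbours (the $n-1\geq 1$ non-neighbours $((g_0,y'),i_0)$ inside the $g_0$-fiber plus the $P_m$-direction neighbour in the middle case, and the $n\,|N_G(g_0)|\geq 2$ fresh fiber vertices in the layer-$i_0\pm1$ case), so propagation never starts and $M(\{s\})\neq V(H\square P_m)$ since $n\geq 2$ leaves $((g_0,y'),i_0)$ unmonitored. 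So your route is the same sandwich as the paper's, but your lower-bound analysis is a genuine strengthening: it rigorously justifies the step the paper only asserts, at the cost of a short case analysis. The only cosmetic point is to state explicitly (as you do implicitly) that the blocked propagation leaves specific vertices unmonitored, which is what converts ``propagation never starts'' into ``$s$ is not a PDS.''
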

\begin{proof}
It is easy to observe that if $\gamma(G)=1,$ then $\gamma(G\circ \overline{K_n})=2$ for all integer $n\geq 2.$ That is, $\gamma_c(H)=2.$ By Theorem \ref{upcpdczfs}, we have $\gamma_{P,\,c}(H\square P_m)\leq 2$ as $Z_c(P_m)=1.$ On the other hand, $\gamma_{P,\,c}(H\square P_m)> 1$ as $\gamma(H)\neq 1.$ Thus, $\gamma_{P,\,c}(H\square P_m)=2.$
\end{proof}

\section{The Tensor Product}

Throughout this section, for a graph $G$ and $H,$ let $V(G)=\{u_1,\,u_2,\,\dots,\,u_a\}$ and $V(H)=\{v_1,\,v_2,\,\dots,\,v_b\}.$ Let $U_i=u_i\times V(H)$ and $V_j=V(G)\times v_j.$ Then $V(G\times H)=\{\bigcup_{i=1}^{a}U_i\}=\{\bigcup_{j=1}^{b}V_j\}.$ The sets $U_i$ and $V_j$ are called the $i^{th}$-row and $j^{th}$-column of the graph $G\times H,$ respectively. 
 
The following theorem is proved for power domination number $G\times H$ but it is true for connected power domination number of $G\times H$ also.   
\begin{theorem}\cite{skp18} \label{cpdntp=1}
If $\gamma_P(G\times H)=\gamma_{P,\,c}(G\times H)=1,$ then $G$ or $H$ is isomorphic to $K_2.$
\end{theorem}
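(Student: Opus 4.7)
The plan is to recognize that the hypothesis is effectively a single condition and that the conclusion then follows immediately from the cited power-domination theorem of \cite{skp18}. The first step is the trivial but decisive observation that any one-element subset $\{w\}\subseteq V(F)$ of any graph $F$ induces a (vacuously) connected subgraph. Applied to $F=G\times H$, this says that a power dominating set $\{(u,v)\}$ of size one is automatically a connected power dominating set, and hence
$$\gamma_{P,c}(G\times H)=1 \Longleftrightarrow \gamma_P(G\times H)=1.$$

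The second step is to invoke the cited result. Under the equivalence above, the combined hypothesis $\gamma_P(G\times H)=\gamma_{P,c}(G\times H)=1$ reduces to the single condition $\gamma_P(G\times H)=1$, and the desired conclusion---that $G\cong K_2$ or $H\cong K_2$---is exactly the statement already proved in \cite{skp18} for the power domination number. No further combinatorial argument is needed, and in particular one does not have to reanalyse the propagation rule on the tensor product.

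The main (and essentially only) obstacle is administrative: one must confirm that the statement cited from \cite{skp18} really is the one-directional implication ``$\gamma_P(G\times H)=1 \Rightarrow G\cong K_2$ or $H\cong K_2$'' (and not, say, a biconditional requiring extra assumptions). Granting this, the proof is a two-line transfer via the single-vertex connectivity remark, which is precisely what the authors have in mind when they write that the power-domination result ``is true for connected power domination number of $G\times H$ also.''
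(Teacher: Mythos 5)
Your proposal is correct and matches the paper's intent exactly: the paper gives no independent proof, but simply cites the result of \cite{skp18} for $\gamma_P$ and notes (as you do) that a single-vertex power dominating set is automatically connected, so the hypothesis collapses to $\gamma_P(G\times H)=1$ and the conclusion transfers immediately. Nothing further is needed.
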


\begin{theorem}
Let $G$ and $H$ be two non-bipartite graphs with at least two universal vertices. Then $\gamma_{P,\,c}(G\times H)= 2.$
\end{theorem}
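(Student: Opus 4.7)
The plan is to establish matching lower and upper bounds. For the lower bound $\gamma_{P,c}(G\times H)\geq 2$, I would invoke Theorem \ref{cpdntp=1}: if the connected power domination number were $1$, then $G$ or $H$ would be isomorphic to $K_2$, but $K_2$ is bipartite, contradicting the hypothesis on both factors. (Weichsel's theorem on the connectedness of tensor products also makes $G\times H$ connected in this setting, so $\gamma_{P,c}$ is well defined.)

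For the upper bound I would exhibit an explicit CPDS of size two. Let $u_1,u_2$ be two universal vertices of $G$ and $v_1,v_2$ two universal vertices of $H$, and take $S=\{(u_1,v_1),(u_2,v_2)\}$. Since $u_1\sim u_2$ in $G$ and $v_1\sim v_2$ in $H$, the tensor adjacency rule gives $(u_1,v_1)\sim(u_2,v_2)$, so $\langle S\rangle\cong K_2$ is connected. Using that $u_1,v_1,u_2,v_2$ are universal, one immediately gets $N((u_i,v_i))=(V(G)\setminus\{u_i\})\times(V(H)\setminus\{v_i\})$ for $i=1,2$; a short case split then pins down the vertices not in $N[S]$ as exactly the two ``off-diagonal'' vertices $(u_1,v_2)$ and $(u_2,v_1)$.

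The remaining step, which I expect to be the main obstacle, is to bring $(u_1,v_2)$ and $(u_2,v_1)$ into the monitored set by propagation. The subtlety is that neither member of $S$ can itself serve as the propagating vertex: tensor adjacency forces both coordinates to differ, so $(u_1,v_1)$ and $(u_2,v_2)$ are not even adjacent to either leftover vertex. To get around this I would use that having two universal vertices together with the non-bipartite hypothesis forces $|V(H)|\geq 3$ (since $K_2$ is the only bipartite graph with two universal vertices). Pick any $w\in V(H)\setminus\{v_1,v_2\}$; then $(u_2,w)\in N((u_1,v_1))\subseteq M(S)$, and because every neighbor of $(u_2,w)$ in $G\times H$ has first coordinate different from $u_2$, the vertex $(u_2,v_1)$ is not a neighbor of $(u_2,w)$, leaving $(u_1,v_2)$ as its unique unmonitored neighbor. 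Propagation therefore adds $(u_1,v_2)$, and a symmetric argument using $(u_1,w)$ adds $(u_2,v_1)$. This would complete the verification that $S$ is a CPDS, giving $\gamma_{P,c}(G\times H)\leq 2$ and hence equality.
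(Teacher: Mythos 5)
Your proposal is correct and follows essentially the same route as the paper: the same two-vertex set $\{(u_1,\,v_1),\,(u_2,\,v_2)\}$, the same identification of $(u_1,\,v_2)$ and $(u_2,\,v_1)$ as the only vertices left undominated, and the same appeal to Theorem \ref{cpdntp=1} for the lower bound. If anything, your propagation step is justified more carefully than the paper's, since you exhibit explicit monitored vertices $(u_2,\,w)$ and $(u_1,\,w)$ (with $w\in V(H)\setminus\{v_1,\,v_2\}$, which exists because $H$ is non-bipartite) each having a unique unmonitored neighbor, whereas the paper only counts the unmonitored neighbors of the leftover vertices themselves.
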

\begin{proof}
Let $\{u_1,\,u_2\}$ and $\{v_1,\,v_2\}$ be universal vertices of the graphs $G$ and $H,$ respectively. Consider the set $X=\{(u_1,\,v_1),\,(u_2,\,v_2)\} \subset V(G\times H).$ Clearly, $\langle X \rangle \cong K_2.$ Since $u_1$ and $v_1$ are the universal vertices of the graphs $G$ and $H,$ respectively, the vertex $(u_1,\,v_1)$ dominates the vertices in the set $\{\bigcup_{i=2}^a(U_i\setminus(u_i,\,v_1))\}.$ The vertex $(u_2,\,v_2)$ dominates the vertices in the set $(V_1\setminus(u_1,\,v_2))\cup\{\bigcup_{j=3}^b (V_j\setminus (u_2,\,v_j))\}$ as $u_2$ and $v_2$ are the universal vertices of the graphs $G$ and $H,$ respectively. Hence, the only unmonitored vertices of the graph $G\times H$ are $(u_1,\,v_2)$ and $(u_2,\,v_1).$ These vertices are monitored by the propagation step as $\vert N(u_1,\,v_2)\setminus X_1\vert =\vert N(u_2,\,v_1)\setminus X_1\vert = 1.$ Thus, $\gamma_{P,\,c}(G\times H)\leq 2.$ By Theorem \ref{cpdntp=1}, we have  $\gamma_{P,\,c}(G\times H) \neq 1.$ Therefore, $\gamma_{P,\,c}(G\times H)= 2.$
\end{proof}

\begin{corollary}\label{ctp1}
\begin{enumerate}
\item[]
\item For $m,\,n\geq 3,\,\gamma_{P,\,c}(K_m\times K_n)=\gamma_{P}(K_m\times K_n)=2.$
\item For $a\geq 1$ and $b\geq 1,\,\gamma_{P,\,c}(K_{1,\,1,\,m_1,\,m_2,\dots,\,m_a}\times K_{1,\,1,\,n_1,\,n_2,\dots,\,n_b})=$
\item[] $\gamma_{P}(K_{1,\,1,\,m_1,\,m_2,\dots,\,m_a}\times K_{1,\,1,\,n_1,\,n_2,\dots,\,n_b})=2.$
\end{enumerate}
\end{corollary}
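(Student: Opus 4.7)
The plan is to apply the preceding theorem to each of the two families in turn and then upgrade the $\gamma_{P,c}=2$ conclusion to $\gamma_{P}=2$ using Observation \ref{O1} and Theorem \ref{cpdntp=1}. The only substantive work is verifying the two hypotheses of the preceding theorem, namely non-bipartiteness and the presence of at least two universal vertices, for the graphs involved.

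For part (1), I would note that $K_m$ with $m\geq 3$ contains a triangle (so it is non-bipartite) and that every vertex of $K_m$ is universal, and likewise for $K_n$ with $n\geq 3$. The preceding theorem applies and yields $\gamma_{P,c}(K_m\times K_n)=2$ at once.

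For part (2), the key observation is that in any complete multipartite graph a vertex is universal if and only if it lies in a singleton part. The part-size sequence of $K_{1,1,m_1,\ldots,m_a}$ contains two $1$'s, giving two universal vertices, and similarly for $K_{1,1,n_1,\ldots,n_b}$ since $b\geq 1$. Moreover, the condition $a\geq 1$ (respectively $b\geq 1$) forces at least three parts, so each factor contains a $K_3$ and is non-bipartite. The main place to be careful is reading the subscript notation correctly — the two $1$'s must be interpreted as two distinct singleton parts rather than a single part of size $2$, and the $m_i$ and $n_j$ are permitted to equal $1$ without disturbing the argument. Invoking the preceding theorem then gives $\gamma_{P,c}=2$ for the tensor product.

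To conclude the equality with $\gamma_{P}$, Observation \ref{O1} gives $\gamma_{P}\leq\gamma_{P,c}=2$. If $\gamma_{P}=1$ held, then the single-vertex PDS would be trivially connected, so $\gamma_{P,c}$ would also equal $1$, and Theorem \ref{cpdntp=1} would force one factor to be $K_2$. Since neither $K_m$ with $m\geq 3$ nor any complete multipartite graph with at least three parts is isomorphic to $K_2$, this is impossible, so $\gamma_{P}=2$. I do not anticipate a real obstacle here; the whole argument is just a hypothesis check feeding into the two previously proved theorems.
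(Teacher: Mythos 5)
Your proposal is correct and matches the paper's intent: the corollary is exactly an application of the preceding theorem after checking that each factor is non-bipartite and has two universal vertices (the two singleton parts in the multipartite case), with $\gamma_P=2$ following from $\gamma_P\leq\gamma_{P,c}=2$ and Theorem \ref{cpdntp=1} ruling out $\gamma_P=1$ since neither factor is $K_2$. No gaps; this is the same route the paper takes.
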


\begin{theorem}\label{cpdsgtimeskx,y}
Let $G$ be a non-bipartite graph. For $2\leq x\leq y,\,\gamma_{P,c}(G\times K_{x,\,y})=\gamma_c(G\times K_2).$ 
\end{theorem}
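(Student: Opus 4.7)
The plan is to show both inequalities by exploiting the natural graph homomorphism $K_{x,y} \to K_2$ that collapses each partite set to a single vertex. The non-bipartite hypothesis on $G$ ensures both $G \times K_2$ and $G \times K_{x,y}$ are connected, so both invariants are meaningful.

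For the upper bound $\gamma_{P,c}(G \times K_{x,y}) \leq \gamma_c(G \times K_2)$, I would take a minimum connected dominating set $D$ of $G \times K_2$. Since $G \times K_2$ has no universal vertex (its maximum degree is $\Delta(G) < 2|V(G)|-1$), we get $|D| \geq 2$, and therefore $D$ is automatically total dominating. Writing $V(K_{x,y}) = X \cup Y$ and fixing $v_1 \in X$, $v_2 \in Y$, lift $D$ to $S = \{(u, v_i) : (u, i) \in D\}$ inside $V(G \times K_{x,y})$. The map $(u, i) \mapsto (u, v_i)$ is a graph isomorphism from $\langle D \rangle$ onto $\langle S \rangle$ because the $K_2$-edge corresponds to $v_1 v_2 \in E(K_{x,y})$, so $\langle S \rangle$ is connected. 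Totality of $D$ supplies, for each $u \in V(G)$, vertices $(u_1, 1), (u_2, 2) \in D$ with $u u_1, u u_2 \in E(G)$; the corresponding $(u_1, v_1), (u_2, v_2) \in S$ jointly dominate $\{u\} \times V(K_{x,y})$. Hence $S$ is already a connected dominating set of $G \times K_{x,y}$ (no propagation needed), and the upper bound follows.

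For the lower bound, given a minimum CPDS $S$ of $G \times K_{x,y}$, define $\pi(u, v) = (u, 1)$ for $v \in X$ and $\pi(u, v) = (u, 2)$ for $v \in Y$, and set $S' = \pi(S)$. Then $|S'| \leq |S|$ and connectedness of $\langle S \rangle$ transfers to $\langle S' \rangle$ because $\pi$ is a graph homomorphism. The crux is to show $S'$ dominates $G \times K_2$. Suppose not: say $(u, 1) \notin N[S']$. Then $S$ meets neither $\{u\} \times X$ nor $N_G(u) \times Y$, so the entire set $A := \{u\} \times X$ is disjoint from $N[S]$ in $G \times K_{x,y}$. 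The main obstacle is certifying that no propagation step can reach $A$. The key structural observation is that the neighbors of $A$ in $G \times K_{x,y}$ all lie in $N_G(u) \times Y$, and each such vertex is adjacent to \emph{every} one of the $x$ vertices of $A$. A straightforward induction on the propagation step then shows $A$ stays entirely unmonitored: any candidate propagator always has at least $x \geq 2$ unmonitored neighbors inside $A$, so the one-unmonitored-neighbor rule never fires. This contradicts $S$ being a power dominating set, so $S'$ must be dominating and hence a CDS of $G \times K_2$, giving $\gamma_c(G \times K_2) \leq |S'| \leq |S|$. The hypothesis $x \geq 2$ is precisely what makes this propagation-blocking work; the upper bound, by contrast, is essentially a routine lifting argument.
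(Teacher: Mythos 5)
Your proof is correct and follows essentially the same route as the paper: for the upper bound you lift a minimum connected dominating set of $G\times K_2$ (using that it is automatically total) to a connected dominating, hence connected power dominating, set of $G\times K_{x,y}$, and for the lower bound you use that an undominated fiber $\{u\}\times X$ can never be reached by propagation because every potential propagator is adjacent to all $x\geq 2$ of its vertices. Your explicit projection homomorphism onto $G\times K_2$ merely makes rigorous the step the paper states tersely (``we can find $i$ or $j$ such that $(u_i,a_1)$ or $(u_j,b_1)$ is not dominated''), so in substance the two arguments coincide.
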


\begin{proof}
Let the bipartition of $K_{x,\,y}$ be $A=\{a_1,\,a_2,\,\dots,\,a_x\}$ and $B=\{b_1,\,b_2,\,\dots,\,b_y\}$ and let $V(G)=\{u_1,\,u_2,\,\dots,\,u_t\}.$ Clearly, $G\times K_{x,\,y}$ is a bipartite graph with bipartition $V_A$ and $V_B,$ where $V_A = V(G) \times A$ and $V_B= V(G) \times B.$ Let $U_i^A=u_i\times A$ and $U_i^B=u_i\times B.$ Then $V(G\times K_{x,\,y}) = V_A \cup V_B= \{\bigcup_{i=1}^t U_i^A\}\cup \{\bigcup_{i=1}^t U_i^B\}.$ Observe that, if $u_iu_j\in E(G),$ then $\langle U_i^A\cup U_j^B\rangle \cong \langle U_j^A\cup U_i^B \rangle\cong K_{x,\,y}.$

 Let $X$ be a minimum connected dominating set of $G\times K_2.$ Now we claim that $X$ is CPDS of $G\times K_{x,\,y}.$ If $(u_i,\,a_i)$ dominates $(u_j,\,b_1),$ then $(u_i,\,a_i)$ dominates all the vertices in $U_j^B$ as $\langle U_i^A\cup U_j^B\rangle \cong K_{x,\,y}.$ Further, each vertex in $G\times K_2$ is adjacent to at least one of the vertices in $X.$ Consequently, $X$ is connected dominating set of $G\times K_{x,\,y}$ and hence $X$ is a CPDS of $G\times K_{x,\,y}.$ From this we have $\gamma_{P,c}(G\times K_{x,\,y})\leq \gamma_c(G\times K_2).$

Assume that $X$ is a minimum CPDS of $G\times K_{x,\,y}$ with $\vert X \vert < \gamma_c(G\times K_2).$ Then we can find $i$ or $j$ such that the vertex $(u_i,\,a_1)$ or $(u_j,\,b_1)$ is not dominated by the vertices in $X.$ This implies that all the vertices in $U_i^A$ or $U_j^B$ are monitored only by  propagation step (not dominating step). But it is not possible as $U_i^A=x\geq 2$ or $U_j^B=y\geq 2.$ Hence, $\gamma_{P,c}(G\times K_{x,\,y})=\gamma_c(G\times K_2).$
\end{proof}

In fact, from the proof of the above theorem, it is easy to observe that $\gamma_{P,c}(G\times K_{x,\,y})= \gamma_{c}(G\times K_{x,\,y})$ for $2\leq x\leq y.$ This observation is used in the proof of the following theorem.

\begin{theorem} \label{gtimeskmn}
Let $G$ be a non-bipartite graph with at least two universal vertices. 
Then $\gamma_{P,c}(G\times K_{x,\,y})= 
\left\{ \begin{array}{rl}
1;& \mbox{if $G \cong C_3$ and $x=y=1,$}\\
2;& \mbox{if $G \not\cong C_3$ and $x=y=1,$}\\
3;& \mbox{if $x=1$ and $y\geq 2,$}\\
4;& \mbox{if $x,\,y\geq 2.$}
\end{array}\right.$
\end{theorem}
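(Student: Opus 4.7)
The plan is to handle the four cases of the theorem separately, matching the four branches of the piecewise formula.

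For the two cases with $x=y=1$, the second factor is $K_{1,1}=K_2$. If $G\cong C_3$, then $C_3\times K_2\cong C_6$ (the bipartite double cover of an odd cycle), and starting from any single vertex of $C_6$ dominates three consecutive vertices; propagation then completes the monitoring around the cycle, so $\gamma_{P,c}(C_3\times K_2)=1$. If $G\not\cong C_3$, the hypotheses force $|V(G)|\geq 4$. For the upper bound $\gamma_{P,c}\leq 2$, I would use $S=\{(u_1,0),(u_2,1)\}$ with $u_1,u_2$ two universal vertices of $G$: since $u_1u_2\in E(G)$, the set induces a $K_2$, and domination covers every vertex of $V(G\times K_2)$ except $(u_1,1)$ and $(u_2,0)$; these two are then monitored by propagation from any $(v,0)$ and $(v,1)$ with $v\in V(G)\setminus\{u_1,u_2\}$ (which exists because $|V(G)|\geq 4$), whose unique unmonitored neighbours are $(u_1,1)$ and $(u_2,0)$ respectively. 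Since Theorem~\ref{cpdntp=1} does not rule out $\gamma_{P,c}=1$ when $H\cong K_2$, the lower bound $\gamma_{P,c}\geq 2$ requires a direct propagation analysis starting from a hypothetical single-vertex starter $\{(u,\epsilon)\}$.

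For Case 3, with $x=1$ and $y\geq 2$, write $V(K_{1,y})=\{a,b_1,\dots,b_y\}$ where $a$ is the centre. I would construct a CPDS of size three as $S=\{(u_1,a),(u_2,b_1),(u_3,a)\}$, where $u_1,u_2$ are universal and $u_3\in V(G)\setminus\{u_1,u_2\}$ (which exists since $|V(G)|\geq 3$). This set induces the path $(u_1,a)-(u_2,b_1)-(u_3,a)$ because $u_1,u_3$ are both adjacent to $u_2$ and $a\sim b_1$. The dominating step leaves only $(u_2,a)$ unmonitored, and any $(w,b_k)$ with $w\in V(G)\setminus\{u_2\}$ is monitored and has $(u_2,a)$ as its unique unmonitored neighbour (since its other neighbours lie in the already-dominated $a$-layer), so propagation finishes the job. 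For the lower bound, Theorem~\ref{cpdntp=1} gives $\gamma_{P,c}\geq 2$; a connected two-vertex subset of the bipartite graph $G\times K_{1,y}$ has the form $\{(p,a),(q,b_i)\}$ with $p\sim q$, and the $y\geq 2$ vertices $(p,b_1),\dots,(p,b_y)$ are then common neighbours of $\{(w,a):w\sim p\}$, so no monitored vertex has any single $(p,b_k)$ as its unique unmonitored neighbour, permanently blocking propagation.

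For Case 4, with $x,y\geq 2$: Theorem~\ref{cpdsgtimeskx,y} reduces the problem to $\gamma_{P,c}(G\times K_{x,y})=\gamma_c(G\times K_2)$, so it suffices to show $\gamma_c(G\times K_2)=4$. The upper bound uses the set $\{(u_1,0),(u_2,1),(v,0),(v,1)\}$ for any $v\in V(G)\setminus\{u_1,u_2\}$, which induces the path $(v,1)-(u_1,0)-(u_2,1)-(v,0)$ and dominates all of $G\times K_2$ by the universality of $u_1,u_2$. For the lower bound $\gamma_c(G\times K_2)\geq 4$, any connected three-vertex subset in the bipartite graph $G\times K_2$ is a path $\{(a,0),(b,1),(c,0)\}$ (or its mirror) with $a\sim b\sim c$ in $G$ and $a\neq c$; then the vertex $(b,0)$ lies on the same side of the bipartition as $(a,0)$ and $(c,0)$ and is adjacent to no member of the set, so cannot be dominated.

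The main obstacle is the lower bound in Case 2. Because $H\cong K_2$ leaves Theorem~\ref{cpdntp=1} silent, one must rule out $\gamma_{P,c}(G\times K_2)=1$ by a careful combinatorial analysis of propagation. The delicate point is that propagation from $\{(u_1,0)\}$ (with $u_1$ universal) can be initiated only by $v$ with $\deg_G(v)=2$, and every such $v$ must satisfy $N_G(v)=\{u_1,u_2\}$; hence all these propagations funnel into $(u_2,0)$ and then into $(u_1,1)$, after which one must argue that the remaining $|V(G)|-2\geq 2$ vertices $(w,0)$ with $w\in V(G)\setminus\{u_1,u_2\}$ cannot be swept by subsequent rounds, because each remaining monitored vertex on side $1$ retains at least two unmonitored neighbours on side $0$.
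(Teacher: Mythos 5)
Your handling of the first, third and fourth branches is correct and essentially the paper's own route: the same observation $C_3\times K_2\cong C_6$, the same three-vertex set $\{(u_1,a),(u_2,b_1),(u_3,a)\}$ for $x=1,\ y\geq 2$ together with the blocking argument that the $y\geq 2$ vertices $(p,b_1),\dots,(p,b_y)$ can never be forced, and for $x,y\geq 2$ the reduction through Theorem~\ref{cpdsgtimeskx,y} (the paper computes $\gamma_c(G\times K_{x,\,y})$ directly, you compute $\gamma_c(G\times K_2)$; both give $4$ by the bipartite $P_3$/$P_4$ argument, and your witness $(b,0)$ for the lower bound is if anything cleaner). The genuine gap is exactly where you flag it: the lower bound $\gamma_{P,c}(G\times K_2)\geq 2$ in Case 2 is never actually proved, and the claim you propose to finish with --- that after $(u_2,0)$ and $(u_1,1)$ are forced ``each remaining monitored vertex on side $1$ retains at least two unmonitored neighbours on side $0$'' --- is false in general. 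A monitored vertex $(w,1)$ has unmonitored neighbours exactly $\{(z,0):z\in N_G(w)\setminus\{u_1,u_2\}\}$, so any $w$ of degree $3$ whose third neighbour lies outside $\{u_1,u_2\}$ forces again, and the sweep can continue to completion.

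Concretely, let $G=K_2\vee(K_1\cup K_2)$, i.e.\ $V(G)=\{u_1,u_2,a,b,c\}$ with $u_1,u_2$ universal and $bc\in E(G)$; this $G$ is non-bipartite, has two universal vertices and is not $C_3$. Starting from the single vertex $(u_1,0)$, domination monitors $(u_2,1),(a,1),(b,1),(c,1)$; then $(a,1)$ forces $(u_2,0)$, $(b,1)$ forces $(c,0)$, $(c,1)$ forces $(b,0)$, $(u_2,0)$ forces $(u_1,1)$, and finally $(u_1,1)$ forces $(a,0)$, so $\gamma_{P,c}(G\times K_2)=1$ even though Case 2 asserts the value $2$. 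Thus the statement you are trying to prove in Case 2 fails for some admissible $G$, and no refinement of your propagation analysis can close the gap without additional hypotheses on $G$ excluding such low-degree configurations. You are right that Theorem~\ref{cpdntp=1} is silent when one factor is $K_2$; note that the paper's own proof of this step is only the one-sentence remark that $G$ has minimum degree at least two and two vertices of full degree, which does not rule out the example above either, so the difficulty you identified is real and not merely a missing routine verification. A smaller point: even where the value $2$ does hold, your sketch only examines starters of the form $(u_1,\epsilon)$ with $u_1$ universal, whereas an arbitrary single starter $(v,\epsilon)$ would also have to be excluded.
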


\begin{proof}
Consider the vertex set of $G\times K_{x,\,y}$ is as in Theorem \ref{cpdsgtimeskx,y}. Let $u_1$ and $u_2$ be two universal vertices of $G.$

First we complete the proof for $x=y=1.$ If $G\cong C_3,$ then $G\times K_2\cong C_6$ and hence $G\times K_2=1.$ Now we assume that $G\not\cong C_3.$ Let $X=\{(u_1,\,a_1),\,(u_2,\,b_1)\}.$ The vertices $(u_1,\,a_1)$ and $(u_2,\,b_1)$ dominates the vertices in $V_B\setminus (u_1,\,b_1)$ and $V_A\setminus (u_2,\,a_1),$ respectively. The vertices $(u_1,\,b_1)$ and $(u_2,\,a_1)$ are monitored by the propagation step as $\vert N((u_1,\,b_1))\setminus X_1\vert= \vert N((u_2,\,b_1))\setminus X_1\vert=1.$ Hence, $\gamma_{P,\,c}(G\times K_2) \leq 2.$ Since $G$ has two universal vertices, minimum degree of $G$ is at least two and two vertices have degree $t-1.$ As a consequence $\gamma_{P,\,c}(G\times K_2) \neq 1.$ Thus, $\gamma_{P,\,c}(G\times K_2) = 2.$

Now we consider $x=1$ and $y\geq 2.$ For this, let $X=\{(u_1,\,a_1),\,(u_2,\,b_1),\, (u_3,\,a_1)\}.$ The set $X$ dominates all the vertices of $G\times K_{1,\,y}$ except $(u_2,\,a_1).$ This vertex is observed by the propagation step and hence $\gamma_{P,\,c}(G\times K_{1,\,y})\leq 3.$ To prove the equality, assume that $\gamma_{P,\,c}(G\times K_{1,\,y})=2.$ Then the CPDS contains two vertices, namely, $X=\{(u_i,\,a_1),\,(u_j,\,b_m)\},$ where $i\neq j.$ WLOG we assume that $i=1$ and $j=2$ as this choice of $i$ and $j$ dominates maximum number of vertices of $G\times K_{1,\,y}.$ The vertices which are dominated by the vertices in $X$ are the vertices in $U_1^B$ and the vertex $(u_2,\,a_2.)$ Since $\vert U_1^B\vert=y\geq 2,$ propagation step from $(u_i,\,a_1)\in V^A$ to the vertices in $U_1^B$ is not possible. This implies that $\gamma_{P,\,c}(G\times K_{1,\,y})\neq 2.$ Thus, $\gamma_{P,\,c}(G\times K_{1,\,y})=3.$

Let $2\leq x\leq y.$ Recall that $\gamma_{P,c}(G\times K_{x,\,y})= \gamma_{c}(G\times K_{x,\,y})$ for $2\leq x\leq y.$ Form this, it is enough to find $\gamma_{c}(G\times K_{x,\,y}).$ Let $X=\{(u_1,\,a_1),\,(u_2,\,b_1),\,(u_3,\,a_1),\,(u_1,\,b_1)\}.$ Clearly, the vertices in the set $X$ dominate all the vertices $G\times K_{x,\,y}$ and $\langle X\rangle \cong P_4$ and hence $\gamma_{c}(G\times K_{x,\,y})\leq 4.$ Since $G\times K_{x,\,y}$ is bipartite, connected subgraph induced by any three vertices of $G\times K_{x,\,y}$ is isomorphic to $P_3.$ Clearly, the end vertices of $P_3$ belong to either $V^A$ or $V^B.$ We assume that the end vertices of $P_3$ belong to $V^A.$ Then the two degree vertex belongs to $V^B.$ Let the two degree vertex be $(u_i,\,b_j).$ Clearly, this vertex does not dominates the vertices in the set $U_i^A.$ Consequently, three vertices do not form the connected dominating set. Therefore, $\gamma_{c}(G\times K_{x,\,y})\geq 4.$ 
\end{proof}

\begin{theorem} \label{gtimesmul}
Let $G$ be a graph with at least two universal vertices. For $k\geq 3$ and $1\leq m_1 \leq m_2 \leq \dots \leq m_k,$ 

$\gamma_{P,\,c}(G\times K_{m_1,\,m_2,\,\dots,\,m_k})= 
\left\{ \begin{array}{rl}
2;& \mbox{if $m_1=m_2=1,$}\\
3;& \mbox{otherwise}
\end{array}\right.$
\end{theorem}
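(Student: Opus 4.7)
The plan is to handle the two cases of the piecewise definition separately. For Case 1 ($m_1 = m_2 = 1$), I would observe that the two singleton parts $W_1 = \{b_1\}$ and $W_2 = \{b_2\}$ of $K_{m_1, \ldots, m_k}$ are themselves universal vertices of that multipartite graph, and since $k \geq 3$ it contains a triangle and is thus non-bipartite; the graph $G$ is non-bipartite for a similar reason, as its two universal vertices together with any third vertex form a triangle. Consequently, the earlier theorem in this section asserting that $\gamma_{P,c}(G \times H) = 2$ whenever $G$ and $H$ are non-bipartite graphs with at least two universal vertices each applies verbatim and yields $\gamma_{P,c}(G \times K_{m_1, \ldots, m_k}) = 2$.

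For Case 2 (the ``otherwise'' branch), the hypothesis $m_1 \leq m_2$ together with ``not $m_1 = m_2 = 1$'' forces $m_2 \geq 2$. For the upper bound I would exhibit an explicit CPDS of size three: let $u_1, u_2$ be universal vertices of $G$, let $u_3 \in V(G) \setminus \{u_1, u_2\}$, and pick $b_i \in W_i$ for $i = 1, 2, 3$. Set $X = \{(u_1, b_1), (u_2, b_2), (u_3, b_3)\}$. Since $u_1 \sim u_2$ and $u_2 \sim u_3$ in $G$ (by universality of $u_1, u_2$) and $b_1, b_2, b_3$ lie in pairwise distinct parts of $K_{m_1,\ldots,m_k}$, $\langle X \rangle$ is a path $P_3$ in the tensor product. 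A routine case analysis on the first coordinate of an arbitrary vertex $(u, w) \in V(G \times K_{m_1, \ldots, m_k})$ then shows $X$ dominates every vertex, because no $w$ can lie in two different parts simultaneously, so for each fixed $u$ at least one of the three domination conditions from $X$ is always met.

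For the lower bound I would argue by contradiction. Suppose $X' = \{(a, b), (a', b')\}$ is a CPDS; connectedness of $\langle X' \rangle$ forces $a \sim a'$ in $G$ and $b \sim b'$ in $K_{m_1, \ldots, m_k}$, so $b \in W_p$ and $b' \in W_q$ with $p \neq q$. Since $m_2 \geq 2$ and at most one of the two indices $p, q$ can equal $1$, I may assume $m_q \geq 2$. Then the set $U = \{a\} \times W_q$ is entirely undominated by $X'$: each $(a, w)$ with $w \in W_q$ fails to be adjacent to $(a, b)$ (no self-loop at $a$) and to $(a', b')$ (both second coordinates lie in the same part $W_q$). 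The decisive observation is that, for any candidate propagator $v = (u, w') \notin U$, the conditions $u \sim a$ and $w' \notin W_q$ required for $v$ to have \emph{any} neighbor in $U$ do not depend on which element of $W_q$ is chosen; therefore $v$ is adjacent either to none of $U$ or to all $m_q \geq 2$ members of $U$. Consequently propagation can never add a single element of $U$ one at a time, contradicting $X'$ being a power dominating set. The chief obstacle is precisely this ``all-or-nothing'' adjacency check: the upper bound reduces to a dominance calculation, whereas the lower bound hinges on exploiting $|W_q| \geq 2$ to stall propagation entirely.
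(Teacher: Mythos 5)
Your proposal is correct and follows essentially the same route as the paper: case $m_1=m_2=1$ via the theorem on two non-bipartite factors with two universal vertices each, the same three-vertex set $\{(u_1,b_1),(u_2,b_2),(u_3,b_3)\}$ for the upper bound, and a lower bound showing any connected $2$-set leaves a whole $\{a\}\times W_q$ undominated with all-or-nothing adjacency blocking propagation. In fact your bookkeeping in the lower bound is slightly more careful than the paper's, which names the sets $U_i^{V_x}\cup U_j^{V_y}$ (these are actually dominated; the undominated ones are the crossed sets $U_i^{V_y}\cup U_j^{V_x}$) and does not explicitly select the part of size at least two as you do.
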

\begin{proof}
Let $V(G)=\{u_1,\,u_2,\,\dots,\,u_t\}.$ For $1\leq i \leq k,$ let $V_i=\{a_1^i,\, a_2^i,\,\dots,\,a_{m_i}^i\}$ denote the $i^{th}$ partite set of the graph $K_{m_1,\,m_2,\,\dots,\,m_k}$ with size $m_i.$ Let $U_i=\{\bigcup_{j=1}^k U_i^{V_j}\},$ where $U_i^{V_j}=u_i\times V_j.$ Then $V(G\times K_{m_1,\,m_2,\,\dots,\,m_k}) = \bigcup_{i=1}^i U_i= \bigcup_{i=1}^i\{\bigcup_{j=1}^k U_i^{V_j}\}.$ Let the universal vertices of $G$ be $u_1$ and $u_2.$ Let $H=G\times K_{m_1,\,m_2,\,\dots,\,m_k}.$

If $m_1=m_2=1,$ then the result follows by Corollary \ref{ctp1}. Now we assume that $m_2\geq 2.$ Consider the set $X=\{(u_1,\,a_1^1),\,(u_2,\,a_1^2),\,(u_3,\,a_1^3)\}.$ The vertices in $V(H)\setminus (U_2^{V_1}\cup U_1^{V_2})$ are dominated by the vertices in $\{(u_1,\,a_1^1),\,(u_2,\,a_1^2)\}$ and the vertices in $U_2^{V_1}\cup U_1^{V_2}$ are dominated by the vertex $(u_3,\,a_1^3).$ Hence $X$ is CPDS of  $H.$ This gives $\gamma_{P,\,c}(H)\leq 3.$ To obtain the reverse inequality, we claim that any set $X$ contains two vertices of $H$ is not a CPDS of $H.$ Let $X=\{(u_i,\,a_1^x),\,(u_j,\,a_1^y)\}.$ Then $X_1=N[X].$ Clearly, the set $X_1$ does not contain the vertices in the set $U_i^{V_x}\cup U_j^{V_y}.$ The propagation step from any vertex in $X_1$ to any vertex in  $U_i^{V_x}\cup U_j^{V_y}$ is not possible as $\vert U_i^{V_x}\vert$ and $\vert U_j^{V_y}\vert$ are at least two. Consequently, $\gamma_{P,\,c}(H) >2.$ Hence, $\gamma_{P,\,c}(G\times K_{m_1,\,m_2,\,\dots,\,m_k})= 3.$
\end{proof}

\begin{theorem}
For $t\geq 3,\,k\geq 3,\,1\leq n_1\leq n_2\leq \dots\leq n_t$ and $1\leq m_1 \leq m_2 \leq \dots \leq m_k,$ we have 

$\gamma_{P,\,c}(K_{n_1,\,n_2,\,\dots,\,n_t}\times K_{m_1,\,m_2,\,\dots,\,m_k})= 
\left\{ \begin{array}{rl}
2;& \mbox{if $n_1=n_2=1$ and $m_1=m_2=1,$}\\
3;& \mbox{otherwise}
\end{array}\right.$
\end{theorem}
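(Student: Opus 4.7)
The plan is to split according to whether each complete multipartite factor has two universal vertices (equivalently, two singleton partite classes) and to reuse the results already proved in this section whenever possible; only the subcase where neither factor has two universal vertices will require a direct argument in the spirit of the proof of Theorem \ref{gtimesmul}.

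First note that $K_{n_1,\dots,n_t}$ has two universal vertices iff $n_1=n_2=1$, and analogously for $K_{m_1,\dots,m_k}$. If $n_1=n_2=m_1=m_2=1$, the conclusion $\gamma_{P,c}=2$ is exactly Corollary \ref{ctp1}(2). If $n_1=n_2=1$ but $m_2\geq 2$, Theorem \ref{gtimesmul} applied with $G=K_{n_1,\dots,n_t}$ (two universal vertices) and $H=K_{m_1,\dots,m_k}$ gives $\gamma_{P,c}=3$, since the ``otherwise'' branch of that theorem applies because $m_2\geq 2$; the symmetric subcase follows by commutativity of the tensor product.

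The only remaining case is $n_2\geq 2$ \emph{and} $m_2\geq 2$. For the upper bound I would pick $u_i\in A_i$ and $v_i\in B_i$ for $i=1,2,3$ (possible since $t,k\geq 3$) and take $X=\{(u_1,v_1),(u_2,v_2),(u_3,v_3)\}$. Any two of these vertices differ in both coordinates and their coordinate entries lie in distinct partite classes of the respective factors, so $\langle X\rangle\cong K_3$; moreover, a one-line case analysis on the partite class of $x$ shows every vertex $(x,y)$ of $G\times H$ is adjacent to some $(u_i,v_i)$, so $X$ is a CPDS of size $3$.

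The main obstacle is the matching lower bound in this remaining case. Size $1$ is excluded by Theorem \ref{cpdntp=1}, so I would suppose for contradiction that $X=\{(a,b),(c,d)\}$ is a CPDS of size $2$; adjacency in $G\times H$ forces $a\in A_p$, $c\in A_q$ with $p\neq q$ and $b\in B_r$, $d\in B_s$ with $r\neq s$, and a direct check shows the set of undominated vertices equals $(A_p\times B_s)\cup(A_q\times B_r)$. The key structural observation is that, for any already monitored vertex $(x_0,y_0)$, its set of neighbors inside $A_p\times B_s$ is either all of $A_p\times B_s$ (when $x_0\notin A_p$ and $y_0\notin B_s$) or empty, and analogously for $A_q\times B_r$. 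Consequently, a propagation step can observe a vertex of $A_p\times B_s$ only when $|A_p\times B_s|=n_p m_s=1$, and similarly for the other product. Under $n_2\geq 2$ and $m_2\geq 2$, the equalities $n_p=m_s=1$ force $p=s=1$, so at most one of the two products can be a singleton (since $p\neq q$ rules out $p=q=1$); and if one does equal a singleton, then $n_q, m_r\geq 2$ imply the other has size $\geq 4$. Hence propagation leaves at least one vertex of $(A_p\times B_s)\cup (A_q\times B_r)$ unmonitored, contradicting $X$ being a CPDS, so $\gamma_{P,c}\geq 3$.
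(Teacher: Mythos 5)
Your proposal is correct and follows essentially the same route as the paper: dispose of the $n_1=n_2=m_1=m_2=1$ case via Corollary \ref{ctp1}, use the triangle $\{(u_1,v_1),(u_2,v_2),(u_3,v_3)\}$ across three partite classes of each factor for the upper bound, and block propagation into the undominated cross-blocks for the lower bound. The only differences are cosmetic: you invoke Theorem \ref{gtimesmul} (plus commutativity) for the mixed cases that the paper absorbs into a WLOG $n_2\geq 2$ assumption, and you write out in full the block-propagation argument that the paper only cites as ``similar to Theorem \ref{gtimesmul}.''
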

\begin{proof}
Let $U_i=\{u_1^i,\,u_2^i,\,\dots,\,u_{n_i}^i\}$ and $V_j=\{v_1^j,\,v_2^j,\,\dots,\,v_{m_j}^j\},$ respectively, denote the partite sets of the graphs $K_{n_1,\,n_2,\,\dots,\,n_t}$ and $K_{m_1,\,m_2,\,\dots,\,m_k}$ of sizes $n_i$ and $m_j.$ If $n_1=n_2=m_1=m_2=1,$ then the result is immediate by Corollary \ref{ctp1}. Now we assume that $n_2\geq 2.$ Consider the set $X=\{(u_1^1,\,v_1^1),\,(u_1^2,\,v_1^2),\,(u_1^3,\,v_1^3)\}.$ The vertices in $X$ dominates all the vertices of $K_{n_1,\,n_2,\,\dots,\,n_t}\times K_{m_1,\,m_2,\,\dots,\,m_k}$ and hence $\gamma_{P,\,c}(K_{n_1,\,n_2,\,\dots,\,n_t}\times K_{m_1,\,m_2,\,\dots,\,m_k})\leq 3.$ By employing similar argument as in Theorem \ref{gtimesmul}, we can conclude that $\gamma_{P,\,c}(K_{n_1,\,n_2,\,\dots,\,n_t}\times K_{m_1,\,m_2,\,\dots,\,m_k})\geq 3.$
\end{proof}

\bmhead{Acknowledgment}
The first author is supported by Dr. D. S. Kothari Postdoctoral Fellowship, University Grand Commission, Government of India, New Delhi through the Grand No.F.4-2/2006(BSR)/MA/20-21/0067.

\end{document}